\newtheorem{satz}{Theorem}
\newtheorem{proposition}[satz]{Proposition}
\newtheorem{theorem}[satz]{Theorem}
\newtheorem{lemma}[satz]{Lemma}
\newtheorem{corollary}[satz]{Corollary}
\def\L{\mathcal{L}}
\def\Z{\mathbb {Z}}
\def\F{\mathbb {F}}
\def\E{\mathsf{E}}
\def\I{{\cal I}}
\def\a{\alpha}
\def\C{\mathbb{C}}
\def\P{{\cal P}}
\def\d{\delta}
\def\({\big (}
\def\){\big )}
\def\G{\Gamma}
\def\le{\leqslant}
\def\ge{\geqslant}
\def\_phi{\varphi}
\def\eps{\varepsilon}
\def\Gr{{\mathbf G}}
\def\ov{\overline}
\def\t{\tilde}
\def\Cf{{\mathcal C}}
\def\la{\lambda}
\def\D{\Delta}
\def\T{\mathsf{T}}
\def\C{\mathbb{C}}
\def\SL{{\rm SL}}
\def\Aff{{\rm Aff}}
\def\F{\mathbb {F}}
\def\R{{\mathbb R}}
\def\LL{{\mathcal L}}
\author{Shkredov I.D.}
\title{On a paucity result in Incidence Geometry  
\footnote{This work is supported by the Russian Science Foundation under grant 19--11--00001.}
	%\newline
	%{\bf Keywords} : Gowers norms, linear equations.
	%\newline
	%MSC 2000 : 11B75, 11B99.}
}
\date{}
\begin{document}
	\maketitle

%\begin{document}

\begin{center}
	Annotation.
\end{center}

{\it \small
    We obtain some asymptotic formulae (with power savings in their error terms) for the number of quadruples in the  Cartesian product of an arbitrary  set $A \subset \R$ and for the number of quintuplets in $A\times A$ for  any subset $A$ of the prime field $\F_p$. Also, we obtain some applications of our results to incidence problems in $\F_p$.
}
\\

\section{Introduction}
\label{sec:introduction}

Incidence Geometry (see, e.g., \cite[section 8]{TV}) deals with the incidences among different geometrical objects such as points, lines, curves, surfaces etc. A typical problem of this area is to estimate the quantity 
\begin{equation}\label{def:I(P,L)} 
    \mathcal{I} (\mathcal{P},\mathcal{L}) :=|\{(p,l)\in \mathcal{P} \times \mathcal{L}:\,p\in l\}| \,,
\end{equation}
where the set of points $\mathcal{P}$ and the set of lines $\mathcal{L}$ belong to $\F\times \F$ with  $\F$ be a field, say. 
In our paper the set of points $\mathcal{P}$ will be  the Cartesian product $A\times B$ for some sets $A, B \subseteq \F$. This particular choice of $\P$ is very important for the applications see, e.g., \cite{collinear}, \cite{RS_SL2}, \cite{s_asymptotic}, \cite{s_He}, \cite{Sol_dZ}, \cite{TV} because, basically, Cartesian products are naturally  connected with arithmetic. 
In this note  we study collinear tuples in $A\times B$. 
Namely, for any $k \ge 3$
%and arbitrary sets $A,B \subseteq \F$ 
we define $\Cf_k(A,B)$ to be the number of collinear $k$--tuples in $A\times B$. Let $\Cf_k(A) = \Cf_k(A,A)$. 
A consequence of the famous Szemer\'edi--Trotter Theorem \cite{ST} gives us that for any $A\subset \R$ one has   
\begin{equation}\label{f:Q_3_R}
    \Cf_3 (A) \ll |A|^4 \log |A| 
    %\,.
\end{equation}
(a short  proof of this result contains in \cite{Sol_dZ}). 
Actually, for $\P = A\times A$ it is easy to see that bound \eqref{f:Q_3_R} is {\it equivalent} to the Szemer\'edi--Trotter Theorem (up to logarithms). 
In our paper \cite{collinear} we have obtained an asymptotic formula for the number of collinear quadruples in the case of the prime field $\F=\F_p$  
\begin{equation}\label{f:Q_4}
    \Cf_4 (A) = \frac{|A|^8}{p^2} + O(|A|^5 \log |A|) \,.
\end{equation}
    The proof is based on Stevens--de Zeeuw's Theorem \cite{SdZ}.
    Both bounds \eqref{f:Q_3_R}, \eqref{f:Q_4} are known to be tight up to some powers of logarithms as the case $A=\{1,\dots, n\}$ shows, namely, $\Cf_3 (A) \gg |A|^4 \log^c |A|$, $c>0$ for such $A$ (see discussion in \cite[Pages 603, 633]{collinear}).
    The problem of finding sharp  asymptotic formula for the number of  collinear triples in $A\times A$, $A\subseteq \F_p$ is a well--known important open question and at the moment the best result is (see \cite{collinear}, \cite{s_asymptotic}) 
\begin{equation}\label{f:Q_3}
    \Cf_3 (A) = \frac{|A|^6}{p} + O(\min\{|A|^{9/2}, p^{1/2} |A|^{7/2} \}) \,.
\end{equation}
    As for $\Cf_k (A)$ with large $k$, then one can easily obtain an analogues of formulae \eqref{f:Q_3_R}, \eqref{f:Q_4} but, generally speaking, the error term must be at least $2|A|^{k+1}$ 
%    (consider 
    (take 
    horizontal/vertical lines), and hence the consideration of large $k$ does not 
    give anything new.

    \bigskip

    In this paper we make a further step and obtain a paucity result  (see Theorems \ref{t:paucity}, \ref{t:f_A_5} below) for higher $\Cf_k (A)$.  
    As we said before, in general, one cannot obtain non--trivial bounds for such quantities and thus we need some restrictions, which we formulate in terms of {\it energies}.
    For any two sets $A,B \subseteq \F$ the {\it additive energy} of $A$ and $B$ is defined by
$$
    \E^{+} (A,B) = |\{ (a_1,a_2,b_1,b_2) \in A\times A \times B \times B ~:~ a_1 - b^{}_1 = a_2 - b^{}_2 \}| \,.
$$
If $A=B$, then  we simply write $\E^{+} (A)$ for $\E^{+} (A,A)$. Similarly, one can define the {\it multiplicative energy} of $A$ and $B$. Finally, put $\ov{\E}^\times (A) = \max_{s\in \F} \E^{\times} (A-s) \le |A|^3$.

%Having a function $f:\F \to \C$,  we put $\ov{\E}^\times (f) = \max_{s\in \F} \E^{\times} (f-s)$.
%In particular, $\ov{\E}^\times (A)\le |A|^3$ for any finite set $A$, $A \subseteq \F$.
    
\begin{theorem}
    Let $A,B\subset \mathbb{R}$ be sets.
    %, $|B| \le |A|$. 
    %, $|B| \le |A|$.
    Then 
    %for $k\ge 4$ one has 
\[
    \Cf_4 (A,B) - |B| |A|^4 - |A| |B|^4 
    \ll 
    |B| |A|^3 + |A| |B|^3 + 
\]
\begin{equation}\label{f:paucity_R_intr}
%    \ll
     (\E^{+} (A) \E^{+} (B))^{1/10} 
     \Cf^{2/5}_3 (A,B) |A|^{8/5} |B|^{6/5} \log^{2/5} |A| + 
    (\ov{\E}^{\times}(A) \ov{\E}^{\times}(B))^{1/4} |A|^2 |B|^{3/2} \log^{3/2} |A| \,.
\end{equation}
    Further let $p$ be a prime number, $A\subset \F_p$. Then 
\begin{equation}\label{f:f_A_5_Q_intr}
    \Cf_5 (A) - \frac{|A|^{10}}{p^3} - 2|A|^6 \lesssim  \frac{|A|^7}{p} +  \frac{|A|^4}{p^2} \Cf_3(A) 
%\]
%\begin{equation}\label{f:f_A_5_Q}
  +
    (\E^{+} (A))^{1/6} |A|^{11/2}  
    %\log^{5/6} |A|
        +
    (\ov{\E}^{\times}(A))^{1/2} |A|^{9/2}
    %\log^{3/2} |A| 
    \,.
\end{equation}
%\[
%    \Cf_5 (A) - \frac{|A|^{10}}{p^3} - 2|A|^6 \ll \frac{|A|^7 \log|A|}{p} +  \frac{|A|^4}{p^2} \Cf_3(A) + 
%\]
%\begin{equation}\label{f:f_A_5_Q_intr}
%  +
%    (\E^{+} (A))^{1/6} |A|^{11/2}  \log^{5/6} |A|
%        +
%    (\ov{\E}^{\times}(A))^{1/2} |A|^{9/2} \log^{3/2} |A| \,.
%\end{equation}
\label{t:paucity_intr} 
\end{theorem}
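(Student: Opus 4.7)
The plan is to reduce both statements to bounds on $\sum_\ell n_\ell^k$, where $\ell$ ranges over lines in $\F^2$ and $n_\ell = |\ell \cap (A \times B)|$ (or $|\ell \cap (A \times A)|$ for $\Cf_5(A)$); up to lower-order corrections from coincident points we have $\Cf_k (A,B) = \sum_\ell n_\ell^k$. The horizontal and vertical lines contribute $|B| |A|^k$ and $|A| |B|^k$ respectively (in the $\Cf_5(A)$ case both equal $|A|^6$, totaling $2|A|^6$), matching the claimed main terms. Thus the remaining task is to control the oblique sum $\Sigma := \sum_{\ell\ \text{oblique}} n_\ell^k$.

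For $\Cf_4(A,B)$ over $\R$, I would dyadically decompose by richness and write $\Sigma \lesssim \log |A| \cdot \sup_n n^4 L_n$, where $L_n$ is the number of oblique lines meeting $A \times B$ in at least $n$ points. The crude Szemer\'edi--Trotter bound $L_n \ll (|A||B|)^2/n^3 + |A||B|/n$ alone delivers only the trivial terms $|B||A|^3 + |A||B|^3$, so I would sharpen it in two ways. First, noting that a line $y = mx + c$ with $n_\ell \ge n$ forces the affine image $mA + c$ to share at least $n$ elements with $B$, I would combine Szemer\'edi--Trotter with a Pl\"unnecke/Balog--Szemer\'edi--Gowers argument driven by $\E^+(A)$ and $\E^+(B)$, producing the $(\E^+(A)\E^+(B))^{1/10}$ factor. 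Second, after translating a chosen rich line through the origin, the slopes of the remaining rich concurrent lines become ratios controlled by multiplicative energy, producing the $(\ov{\E}^\times(A)\ov{\E}^\times(B))^{1/4}$ factor. Since $\Cf_3(A,B) = \sum_\ell n_\ell^3$ is provided as input, a H\"older interpolation then contributes the $\Cf_3(A,B)^{2/5}$ factor.

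For $\Cf_5(A)$ over $\F_p$, I would write $n_\ell = \mu + \delta_\ell$ on oblique $\ell$, where $\mu := |A|^2/p$ is the average number of points of $A \times A$ on a random oblique line, and expand
\[
    \sum_{\ell\ \text{oblique}} (\mu + \delta_\ell)^5 = \sum_{k=0}^{5} \binom{5}{k} \mu^{5-k} \sum_\ell \delta_\ell^k.
\]
The $k=0$ term equals $(p^2-p)\mu^5 = |A|^{10}/p^3 + O(|A|^{10}/p^4)$, reproducing the stated main term; the $k=1$ term vanishes since $\sum_\ell n_\ell = (p-1)|A|^2 = (p^2-p)\mu$. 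The contributions from $k=2,3$ reduce, via direct incidence counting in terms of collinear pair and triple counts, to $O(|A|^7/p)$ and $O(|A|^4 \Cf_3(A)/p^2)$ respectively. For $k=4,5$ I would apply Stevens--de Zeeuw to dyadic richness classes, refined by the additive and multiplicative energy arguments used in the real case, producing the $(\E^+(A))^{1/6} |A|^{11/2}$ and $(\ov{\E}^\times(A))^{1/2} |A|^{9/2}$ error terms.

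The main obstacle I expect is obtaining the energy-refined bounds on $L_n$ with the right exponents. In particular, producing a genuine multiplicative-energy count requires choosing a favorable origin via the maximization defining $\ov{\E}^\times$, while balancing the two competing refinements against the $\Cf_3$ input by H\"older to land on the precise exponents $1/10$, $2/5$, $8/5$, $6/5$ in \eqref{f:paucity_R_intr} (and analogously $1/6$, $1/2$ in \eqref{f:f_A_5_Q_intr}) will require delicate interpolation between the Szemer\'edi--Trotter / Stevens--de Zeeuw bound and the two energy-based refinements.
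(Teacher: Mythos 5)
Your skeleton is right in outline --- the main terms from horizontal/vertical lines, the dyadic decomposition over richness with the threshold optimized against $\Cf_3(A,B)$ (resp.\ against the fourth moment in the $\F_p$ case), and the binomial expansion of $n_\ell=\mu+\delta_\ell$ around $\mu=|A|^2/p$ for $\Cf_5$ --- but the analytic engine you propose for bounding the number of rich lines is not the one that makes the theorem work, and as described it has a genuine gap. The paper never uses Szemer\'edi--Trotter (or Stevens--de Zeeuw) refined by Balog--Szemer\'edi--Gowers. Instead, the set $\mathcal{L}_\tau$ of $\tau$-rich oblique lines is viewed as a subset of the affine group $\Aff(\F)$, and two inputs are combined: (i) a Cauchy--Schwarz incidence bound for Cartesian products (Proposition \ref{p:counting_collinear}), which controls $\I(A\times B,\mathcal{L}_\tau)\ge\tau|\mathcal{L}_\tau|$ in terms of the multiplicative energy $\T_{2^k}(\mathcal{L}_\tau)$ of the line set inside $\Aff(\F)$; and (ii) the Petridis--Roche-Newton--Rudnev--Warren energy bound $\E(\mathcal{L}_\tau)\ll m^{1/2}(\mathcal{L}_\tau)|\mathcal{L}_\tau|^{5/2}+M(\mathcal{L}_\tau)|\mathcal{L}_\tau|^2$ (Theorem \ref{t:Aff_energy}). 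Your two ``refinements'' (a rich line of slope $m$ forces $|(mA+c)\cap B|\ge n$; rich concurrent lines have slopes that are popular ratios) are exactly the observations behind Lemma \ref{l:m,M_bounds}, which bounds $m(\mathcal{L}_\tau)\le\tau^{-2}(\E^{+}(A)\E^{+}(B))^{1/2}$ and $M(\mathcal{L}_\tau)\le\tau^{-2}(\ov{\E}^{\times}(A)\ov{\E}^{\times}(B))^{1/2}$. But these quantities only control the maximal number of rich lines in a \emph{fixed} direction or through a \emph{fixed} point; by themselves they say nothing about the total count $|\mathcal{L}_\tau|$, since there could be many directions and many pencils. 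The bridge from $(m,M)$ to $|\mathcal{L}_\tau|$ is precisely the affine-group energy theorem fed into the Cauchy--Schwarz incidence bound, and that bridge is absent from your proposal. A BSG-type argument cannot substitute for it: BSG extracts structure when an energy is \emph{large}, whereas here the hypotheses make the relevant energies \emph{small} and one needs a count, not a structure theorem.

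Concretely, without Corollary \ref{c:S_t_Aff} (which gives $|\mathcal{L}_\tau|\ll\tau^{-4}(m(\mathcal{L}_\tau)|A|^8|B|^6\log^2|A|)^{1/3}+\tau^{-3}M^{1/2}(\mathcal{L}_\tau)|A|^2|B|^{3/2}\log^{1/2}|A|$ over $\R$) you cannot reach the exponents $1/10$, $2/5$, $8/5$, $6/5$ in \eqref{f:paucity_R_intr}; your own text concedes that the interpolation producing them is left open, and Szemer\'edi--Trotter alone gives back only the trivial terms. The same gap recurs in the $\F_p$ part: there the fifth moment of $\sum_x f_A(x)f_A(lx)$ is bounded by rerunning the identical affine-energy machinery for the balanced function (with the fourth-moment bound $\sum_l|\sum_x f_A(x)f_A(lx)|^4\ll|A|^5\log|A|$ from \cite{collinear} playing the role of the $\Cf_3$ input), not by Stevens--de Zeeuw on dyadic richness classes. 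Your treatment of the lower-order binomial terms ($k=0,1,2,3$) is essentially consistent with the paper's computation, so the $\F_p$ reduction is fine; it is the moment estimate for the rich lines that is missing in both halves.
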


\begin{corollary}
    Let $A,B \subset \R$, $A=B$ and $\E^{+} (A), \ov{\E}^{\times}(A) \le |A|^{3-c}$, where $c>0$. Substituting estimate \eqref{f:Q_3_R} to Theorem \ref{t:paucity_intr}, we get 
\begin{equation}\label{tmp:CF_4_intr}
    \Cf_4 (A) - 2 |A|^5 \ll |A|^{5-c/5} \log^{4/5} |A| \,,
\end{equation}
and in the case of the prime field and, say, $|A| \le p^{2/3}$, we have 
\begin{equation}\label{tmp:CF_5_intr}
   \Cf_5 (A) - 2 |A|^6 \lesssim  |A|^{6-c/6} \log^{5/6} |A| \,.
\end{equation}
\end{corollary}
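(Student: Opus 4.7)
\emph{Plan.} The proof is a direct substitution of the hypotheses into the two bounds \eqref{f:paucity_R_intr} and \eqref{f:f_A_5_Q_intr} of Theorem~\ref{t:paucity_intr}, followed by exponent arithmetic to identify the dominant error term.

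For \eqref{tmp:CF_4_intr}, I would set $A=B$ in \eqref{f:paucity_R_intr}. The contributions $|B||A|^4+|A||B|^4 = 2|A|^5$ match exactly the main term subtracted on the left-hand side, while $|B||A|^3+|A||B|^3 = 2|A|^4$ is negligible compared with $|A|^{5-c/5}$. For the additive-energy term I substitute $\E^{+}(A)\le |A|^{3-c}$ together with the bound $\Cf_3(A)\ll |A|^4\log|A|$ from \eqref{f:Q_3_R}; the exponent of $|A|$ then works out to $\tfrac{1}{5}(3-c)+\tfrac{8}{5}+\tfrac{8}{5}+\tfrac{6}{5} = 5-c/5$, with the explicit $\log^{2/5}|A|$ factor of the theorem multiplying the $\log^{2/5}|A|$ produced by $\Cf_3^{2/5}(A)$ to give $\log^{4/5}|A|$. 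The multiplicative-energy term, after substituting $\ov{\E}^{\times}(A)\le |A|^{3-c}$, yields exponent $\tfrac{1}{2}(3-c)+2+\tfrac{3}{2} = 5-c/2 < 5-c/5$, so it is absorbed.

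For \eqref{tmp:CF_5_intr} the hypothesis $|A|\le p^{2/3}$ gives $p\ge |A|^{3/2}$, so the three ``main-term-like'' contributions $|A|^{10}/p^3$ (which I move to the right-hand side), $|A|^7/p$, and $(|A|^4/p^2)\Cf_3(A)$ (bounded via \eqref{f:Q_3}) are each at most $|A|^{11/2}$. The additive-energy error gives $(\E^{+}(A))^{1/6}|A|^{11/2} \le |A|^{(3-c)/6+11/2} = |A|^{6-c/6}$, and the multiplicative-energy error $(\ov{\E}^{\times}(A))^{1/2}|A|^{9/2}\le |A|^{6-c/2}$ is strictly smaller. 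Because $\E^{+}(A)\ge |A|^2$ already forces $c\le 1$, one has $|A|^{11/2}\le |A|^{6-c/6}$, so the preliminary terms are absorbed, and the claimed bound follows up to the logarithmic factors hidden in $\lesssim$.

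There is no real obstacle to this argument; it is a bookkeeping exercise. The only point requiring care is the verification that the additive-energy term is actually dominant and that the main-term-like contributions can be absorbed under $|A|\le p^{2/3}$, both of which are elementary inequalities between exponents.
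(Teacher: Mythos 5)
Your substitution and exponent bookkeeping are correct, and this is exactly how the paper obtains the corollary (it gives no further proof beyond substituting \eqref{f:Q_3_R} and \eqref{f:Q_3} into Theorem \ref{t:paucity_intr} and checking that the multiplicative-energy and $p$-dependent terms are dominated under $c\le 1$ and $|A|\le p^{2/3}$). Nothing to add.
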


Once again the condition  $\E^{+}(A) \le |A|^{3-c}$ is required for \eqref{tmp:CF_4_intr}, \eqref{tmp:CF_5_intr}.
One can easily see that a random set $A$ satisfies both %requirements 
assumptions 
$\E^{+} (A), \ov{\E}^{\times}(A) \le |A|^{3-c}$, where $c>0$. Also, there are some concrete  constructions of such sets, see section \ref{sec:proof}, namely, one can  take $A=B^{-1}+s$ for any $s\in \F$ and $B\subseteq \F$ such that $|B+B|\ll |B|$ and then $\E^{+} (A), \ov{\E}^{\times}(A) \le |A|^{3-c}$. 
It seems like that our Theorem \ref{t:paucity_intr}  is the first %result  
paucity result in the area. 
%of such type. 
As for lower bounds on $\Cf_k (A)$ we trivially have $\Omega (|A|^k)$ in the error term, counting $k$--tuples, which belong to the diagonal $(x,x)$, where $x$ runs over $A$.

We obtain some applications of our paucity theorems in section \ref{sec:applications}. 
For example, let us formulate a consequence of our new incidence result in $\F_p$ concerning points/lines incidences. 
%Finally, let us remark that it is not possible to 
%It is easy to see that in general it is not possible to obtain  such formulae similar to \eqref{eq:incidences_new_intr} (basically, because the affine group group over the field is very far to being simple) but we obtain such a result under some additional restrictions on sets. 
The basic regime here is the following: the sets $A$, $B$ are large and the sets $X,Y$ are small comparable to $p$.

\begin{theorem}
    Let $k \ge 2$ be an integer, $M\ge 1$ be a real number,  $A,B,C,X,Y\subseteq \F_p$, $0\notin X$  be sets, $|YC|\le M|Y|$,
    and $|C| \gtrsim_k M^{2^{k+1}}$, $|C|^{k-1} \ge (p/|Y|)^2$. 
    Suppose that 
    $$
        \max\{ \E^{+} (B), \ov{\E}^\times (B) \} \le |B|^{3-\d} \,,
    $$
    where $\d>0$ is a constant. 
    Then 
    %the number of the solutions to the equation
\begin{equation}\label{eq:incidences_new_intr}
    |\{ (a,b,x,y)\in A\times B \times X\times Y ~:~ y = bx+a \}| 
        - \frac{|A||B||X||Y|}{p}
    \ll 
    \sqrt{|A||B|} |X| |Y| \cdot |B|^{-\frac{\d}{35 \cdot 2^k}} \,.
    %\,, 
\end{equation}
\label{t:incidences_new_intr}
\end{theorem}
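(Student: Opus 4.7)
The strategy is to deduce Theorem~\ref{t:incidences_new_intr} from the paucity bound for $\Cf_5(B)$ in Theorem~\ref{t:paucity_intr}, via an amplification of the point set $Y$ using $C$, combined with a H\"older reduction of the incidence count to collinear $5$--tuples in $B\times B$.

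First I would amplify the point set. Since $|YC|\ls M|Y|$, iterating Pl\"unnecke--Ruzsa gives $|YC^{k-1}|\ls M^{2^{k-1}}|Y|$, and the hypothesis $|C|^{k-1}\gs(p/|Y|)^2$ guarantees that $YC^{k-1}$ is of size comparable to $p/|Y|$, hence globally dense in $\F_p$. Multiplying the defining equation $y=bx+a$ by any product $c=c_1\cdots c_{k-1}\in C^{k-1}$ gives $cy=(cb)x+(ca)$, so every original incidence lifts to roughly $|C|^{k-1}$ amplified incidences between the lines $(C^{k-1}A)\times(C^{k-1}B)$ and the points $X\times YC^{k-1}$. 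This lift is exactly what makes the expected main term $|A||B||X||Y|/p$ emerge naturally, as the density of the amplified point set times the number of amplified lines.

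Next I would reduce the amplified count to $\Cf_5(B)$. Writing $I=\sum_{b\in B}N(b)$ with $N(b)=|\{(a,x,y)\in A\times X\times Y:y=bx+a\}|$, H\"older's inequality gives $I^{5}\ls |B|^{4}\sum_b N(b)^{5}$. Expanding $N(b)^{5}$ and using the rearrangement $b=(y-a)/x$, one sees that $\sum_b N(b)^{5}$ is a weighted count of $5$--tuples of incidences sharing a common slope, which by an interchange of quantifiers becomes a weighted count of collinear $5$--tuples in $B\times A$ whose ``defining point'' lies in $X\times Y$; after absorbing the $|A|^{5}$ factor, this is bounded by $\Cf_5(B)$. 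Theorem~\ref{t:paucity_intr}, applied with $\max\{\E^{+}(B),\ov{\E}^{\times}(B)\}\le|B|^{3-\d}$, then yields
$$\Cf_5(B)-\frac{|B|^{10}}{p^3}-2|B|^{6}\lesssim |B|^{6-\d/6},$$
whose first term reproduces the desired main incidence term $|A||B||X||Y|/p$ and whose error provides the power saving.

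The main obstacle is balancing the Pl\"unnecke losses against the paucity saving. Each round of amplification contributes a factor $M^{2}$ in the Ruzsa constant (so $M^{2^{k}}$ in total after $k$ iterations), which is absorbed precisely by the hypothesis $|C|\gtrsim_k M^{2^{k+1}}$. Combining this with the H\"older order $5$, the $\sqrt{|A||B|}$ factor arising from a final Cauchy--Schwarz, and the paucity saving $|B|^{-\d/6}$ yields the stated exponent $\d/(35\cdot 2^{k})$. A delicate technical point is that the trivial $2|B|^{6}$ term from paucity must not interfere with the main incidence term: this is precisely the role of the density condition $|C|^{k-1}\gs(p/|Y|)^{2}$, which forces the amplified point set $YC^{k-1}$ to be dense enough in $\F_p$ so that the diagonal and random contributions to $\Cf_5$ separate cleanly from one another.
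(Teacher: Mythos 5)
There is a genuine gap — in fact several linked ones. First, you apply H\"older to the raw incidence count $I=\sum_{b\in B}N(b)$. That can only ever give an upper bound on $I$, never the asymptotic formula with main term $|A||B||X||Y|/p$: once you write $I^5\le |B|^4\sum_b N(b)^5$ the main term has not been subtracted, so even a perfect bound on $\sum_b N(b)^5$ cannot produce a power-saving error around $|A||B||X||Y|/p$. The paper avoids this by first splitting off the main term with the balanced function, $\I(A\times B,\LL)=\frac{|A||B||\LL|}{p}+\sigma$ with $\sigma=\sum_{x\in A}\sum_{l\in\LL}f_B(lx)$, and only then amplifying $\sigma$ by H\"older inside the affine group. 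Second, your bound $\sum_b N(b)^5\lesssim \Cf_5(B)$ is not correct as stated and, even in a corrected form (it is at best a restricted version of $\Cf_5(A,B)$ over the lines coming from $X\times Y$), passing to the global quantity $\Cf_5$ discards the line set entirely, so the factor $|X||Y|$ — which must appear in the final estimate — is lost. The paper keeps the line set $\LL=\{(x,y)\}$ throughout and controls it via its affine energy, $\T_{2^k}(\LL)\le |X|^{2^{k+1}-1}\T^{+}_{2^k}(Y)$ (Lemma \ref{l:T_k-E(L)}), which is where $X$ and $Y$ enter the error term.

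Third, the amplification step is based on a false premise: by the multiplicative Pl\"unnecke--Ruzsa inequality $|YC^{k-1}|\le M^{k-1}|Y|$, so $YC^{k-1}$ does not become dense in $\F_p$, whatever the size of $|C|^{k-1}$; the intended regime is precisely that $Y$ (and hence $YC^{k-1}$) is small. The hypotheses $|YC|\le M|Y|$, $|C|\gtrsim_k M^{2^{k+1}}$ and $|C|^{k-1}\ge (p/|Y|)^2$ play a completely different role in the paper: they are the hypotheses of Theorem \ref{t:upper_T(AB)+} (applied with $A=Y$, $B=C$), and the condition $|C|^{k-1}\ge (p/|Y|)^2$ ensures that in \eqref{f:upper_T(AB)} the term $|Y|^{2^{k+1}}/p$ dominates, i.e.\ $\T^{+}_{2^k}(Y)\lesssim_k M^{2^{k+1}}|Y|^{2^{k+1}}/p$. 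It is this bound, combined with the balanced-function fifth-moment paucity estimate \eqref{f:f_A_5} of Theorem \ref{t:f_A_5} applied to $f_B$ (not the set version \eqref{tmp:CF_5_intr}, which needs $|B|\le p^{2/3}$, while the theorem is claimed even for $|B|\gg p$), that produces the factor $\big(|B|^{1-\d/7}/(|X|p)\big)^{1/5\cdot 2^k}$ and hence the saving $|B|^{-\d/(35\cdot 2^k)}$. As written, your argument has no mechanism to generate the $1/p$ in the main term or the stated dependence on $|X|,|Y|$, so the proof does not go through without adopting the balanced-function decomposition and the $\T_{2^k}$-energy bookkeeping.
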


There are two main advantages of Theorem \ref{t:incidences_new_intr}. First of all, it is an asymptotic formula (and, again, the set $X\times Y$, which corresponds to the lines $\L$ can be rather small, even of size $p^\eps$) but not just upper bounds for $\I (\mathcal{P}, \mathcal{L})$ as in \cite{SdZ}, say. 
Some asymptotic formulae for the quantity $\I (\mathcal{P}, \mathcal{L})$  were known before in the specific case of large sets (see \cite{Vinh})
%or estimate \eqref{f:Vinh} below) 
and in the case  of Cartesian products but with large sets of lines, see \cite{s_asymptotic} and  \cite{SdZ}.
Such asymptotic formulae are important in the problem of estimating exponential sums (see, e.g., \cite{s_asymptotic}) and in the questions on mixing times of some Markov chains \cite{s_He}, where one requires to have the set of points $A\times B$ to be really large. The second advantage of Theorem \ref{t:incidences_new_intr} is that it works even for $|A|, |B| \gg p$ (for size of the set $B$ see the exact formulation  of Theorem \ref{t:incidences_new} below).

\section{Definitions and preliminaries}

By $\Gr$ we denote an abelian group.
% with the identity $1$.
Sometimes we underline the group operation writing $+$ or $\times$ in  the considered quantities (as the energy, the representation function and so on, see below).  
Let $\F$ be the  field $\R$ or $\F=\F_p = \Z/p\Z$ for a prime $p$. Let $\F^* = \F \setminus \{0\}$.

We use the same capital letter to denote  set $A\subseteq \F$ and   its characteristic function $A: \F \to \{0,1 \}$ 
and in the case of finite $\F$ we write $f_A (x) := A(x) - |A|/|\F|$ for the {\it balanced function} of $A$.
Given two sets $A,B\subset \Gr$, define  
%the \textit{product set} (
the {\it sumset} 
%in the abelian case) 
of $A$ and $B$ as 
$$A+B:=\{a+b ~:~ a\in{A},\,b\in{B}\}\,.$$
In a similar way we define the {\it difference sets} and {\it higher sumsets}, e.g., $2A-A$ is $A+A-A$. 
We write $\dotplus$ for a direct sum, i.e., $|A\dotplus B| = |A| |B|$. 
For an abelian group $\Gr$
the Pl\"unnecke--Ruzsa inequality (see, e.g., \cite{TV}) holds stating
\begin{equation}\label{f:Pl-R} 
|nA-mA| \le \left( \frac{|A+A|}{|A|} \right)^{n+m} \cdot |A| \,,
\end{equation} 
where $n,m$ are any positive integers. 
%It follows from a more general inequality contained in  \cite{Petridis} that  for arbitrary sets $A,B,C \subseteq \Gr$ one has 
%\begin{equation}\label{f:Petridis}
%    |B+C+X| \le \frac{|B+X|}{|X|} \cdot  |C+X| \,,
%\end{equation}
%where $X\subseteq A$ minimizes  the quantity $|B+X|/|X|$.
We  use representation function notations like  $r_{A+B} (x)$ or $r_{A-B} (x)$ and so on, which counts the number of ways $x \in \Gr$ can be expressed as a sum $a+b$ or  $a-b$ with $a\in A$, $b\in B$, respectively. 
For example, $|A| = r_{A-A} (0)$.

For any two sets $A,B \subseteq \Gr$ the {\it additive energy} of $A$ and $B$ is defined by
$$
\E (A,B) = \E^{+} (A,B) = |\{ (a_1,a_2,b_1,b_2) \in A\times A \times B \times B ~:~ a_1 - b^{}_1 = a_2 - b^{}_2 \}| \,.
$$
If $A=B$, then  we simply write $\E^{} (A)$ for $\E^{} (A,A)$.
More generally, for sets (real functions) $A_1,\dots, A_{2k}$ ($f_1,\dots, f_{2k}$) belonging to an arbitrary (noncommutative) group $\Gr$ and 
%integer 
$k\ge 2$ define the energy 
$\T_{k} (A_1,\dots, A_{2k})$ as 
%как число решений уравнения 
\[
	\T_{k} (A_1,\dots, A_{2k}) 
	=
\]
\begin{equation}\label{def:T_k}
	=
	 |\{ (a_1, \dots, a_{2k}) \in A_1 \times \dots \times A_{2k} ~:~ a_1 a^{-1}_2  \dots a_{k-1} a^{-1}_k = a_{k+1} a^{-1}_{k+2}  \dots a_{2k-1} a^{-1}_{2k}  \}| \,,
\end{equation}
and 
\[
	\T_{k} (f_1,\dots, f_{2k}) 
	=
	\sum_{a_1 a^{-1}_2  \dots  a_{k-1} a^{-1}_k = a_{k+1} a^{-1}_{k+2} \dots a_{2k-1} a^{-1}_{2k}} f_1(a_1) \dots f_{2k} (a_{2k}) \,.
\]
In the abelian case put for $k\ge 2$  
\begin{equation}\label{def:E_k}
\E^{+}_k (A) = \sum_x r^k_{A-A} (x) = \sum_{\a_1, \dots, \a_{k-1}} |A\cap (A+\a_1) \cap \dots  \cap (A+\a_{k-1})|^2 \,.
\end{equation}
Clearly, $|A|^k \le \E^{+}_k (A) \le |A|^{k+1}$.
%Also, we write $\hat{\E}^{+}_k (A) = \sum_x r^k_{A+A} (x)$.  
Having a function $f:\F \to \C$,  we put $\ov{\E}^\times (f) = \max_{s\in \F} \E^{\times} (f-s)$.
%and if $A \subseteq \F$ is a set, then $\ov{\E}^\times (A) $
In particular, $\ov{\E}^\times (A)\le |A|^3$ for any finite set $A$, $A \subseteq \F$.

The signs $\ll$ and $\gg$ are the usual Vinogradov symbols.
When the constants in the signs  depend on a parameter $M$, we write $\ll_M$ and $\gg_M$. 
All logarithms are to base $2$.
If we have a set $A$, then we will write $a \lesssim b$ or $b \gtrsim a$ if $a = O(b \cdot \log^c |A|)$, $c>0$.
Let us denote by $[n]$ the set $\{1,2,\dots, n\}$.
%By $K_{s,t}$ denote the complete subgraph with two parts of sizes $s$ and $t$.  

\bigskip

Given an arbitrary line $l$
%, intersecting $A\times B$ in at least two points, 
denote by $i_{A,B} (l) = |l\cap (A\times B)|$ hence in particular, $\mathcal{I} (A\times B,\mathcal{L})=\sum_{l\in L} i_{A,B} (l)$.
Clearly, for $k\ge 3$ one has 
\begin{equation}\label{def:Q_k_i} 
    \Cf_k(A,B) = \sum_{l~:~i_{A,B} (l)>1} i^k_{A,B} (l) + O_k ( |A|^2 |B|^2 \cdot \left( \min\{|A|, |B|\} \right)^{k-3} \,.
\end{equation}
%where $|\theta| \le 1$. 
One can think about the first term in \eqref{def:Q_k_i} as another definition of the quantity $\Cf_k(A,B)$.

%\bigskip 

%Notice that we need the term depending on $\E^{+} (A)$, $\E^{+} (B)$  in  Theorem \ref{t:paucity} to obtain an asymptotic formula. 
%Indeed, if $A=B = [n]$, $n\ll \sqrt{|\F|}$, then one can show that  $\Cf_3 (A) \gg |A|^4 \log^c |A|$, where $c>0$ is an absolute constant, see details in \cite[Appendix A, page 633]{collinear}. 

\bigskip

We need a result on the energy of an arbitrary set of affine transformations, see \cite[Theorem 2.1]{PR-NRW}.
We consider any set of affine transformations $\mathcal{L} \subseteq \Aff(\F)$ as a subset of $\F^* \times \F$, i.e., we associate a map $x\to ax+b$ with a point $(a,b) \in \F^* \times \F$.

\begin{theorem}
    Let $\mathcal{L} \subseteq \Aff(\F)$ be a set of lines  such that no line in $\F^* \times \F$ contains more than $M(\mathcal{L})$ points of $\mathcal{L}$, and no vertical line in $\F^* \times \F$  contains more than $m(\mathcal{L})$ points of $\mathcal{L}$. 
    Suppose that $m(\mathcal{L})|\mathcal{L}| \le p^2$.
    Then 
$$
    \E(\mathcal{L}) \ll m^{1/2}(\mathcal{L}) |\mathcal{L}|^{5/2} + M(\mathcal{L}) |\mathcal{L}|^2 \,.
$$
\label{t:Aff_energy}
\end{theorem}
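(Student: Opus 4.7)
The plan is to rewrite $\E(\mathcal{L})$ as an incidence count in $\F_p^2$ and then invoke a sharp point--line incidence theorem.  Writing $g_i=(a_i,b_i)$ and unfolding the group law $(a,b)(a',b')=(aa', ab'+b)$, the equation $g_1 g_2^{-1}=g_3 g_4^{-1}$ reduces to the system
\[
a_1 a_4 = a_2 a_3, \qquad a_4(b_1-b_3) = a_3(b_2-b_4),
\]
so $\E(\mathcal{L})$ counts quadruples from $\mathcal{L}^4$ satisfying this pair of constraints.

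Fixing $(g_1,g_3)\in\mathcal{L}^2$ and setting $\sigma=g_1^{-1} g_3$, the two constraints collapse into a single affine-linear relation on $(g_2,g_4)$: indeed, $g_4$ is the image of $g_2$ under the affine self-map $T_\sigma$ of $\F^2$ that acts by scaling the first coordinate and a first-coordinate-dependent shift in the second.  Thus
\[
\E(\mathcal{L}) = \sum_{(g_1,g_3)\in\mathcal{L}^2} \big|\{g_2\in\mathcal{L}:\, T_\sigma(g_2)\in\mathcal{L}\}\big|,
\]
which regroups by $\sigma$ as a sum $\sum_\sigma r(\sigma)^2$ with $r(\sigma)=|\mathcal{L}\cap\sigma\mathcal{L}|$.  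This is now a bona fide incidence count: treating each $\sigma\in\Aff(\F)$ as a point in $\F^{*}\times\F$, the pairs $(g,g')\in\mathcal{L}^2$ with $g^{-1}g'=\sigma$ correspond to incidences between $\mathcal{L}$ and its affine translates, and the contribution of a single $\sigma$ is controlled by how many points of $\mathcal{L}$ lie on the corresponding translate-line.

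The main step is to feed this incidence structure into an $\F_p$-version of Szemer\'edi--Trotter, e.g.\ the Stevens--de Zeeuw bound.  The condition $m(\mathcal{L})|\mathcal{L}|\le p^2$ is precisely the regime where such a bound is non-trivial in $\F_p$ (avoiding the known obstruction when the point set fills out $\F_p^2$).  The generic part of the incidence count produces the main term $m^{1/2}(\mathcal{L})|\mathcal{L}|^{5/2}$, the exponent $1/2$ on $m$ arising from the bound on $\mathcal{L}$-points per vertical line; pairs whose associated $\sigma$ lies on a ``heavy'' non-vertical line (containing more than $M(\mathcal{L})$ points of $\mathcal{L}$) are collected separately and contribute at most $M(\mathcal{L})|\mathcal{L}|^2$.

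The principal obstacle is the bookkeeping in the incidence set-up: the parametrisation must be chosen so that the vertical multiplicity $m$ and the non-vertical multiplicity $M$ enter with exponents $1/2$ and $1$ respectively, and so that the hypothesis $m|\mathcal{L}|\le p^2$ appears exactly as the non-degeneracy condition for the incidence bound.  Once the geometry is packaged correctly, a single application of the point--line incidence theorem, together with a dyadic separation of heavy from light lines, yields the claimed estimate.
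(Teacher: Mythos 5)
Your reduction steps are fine as far as they go: unfolding $g_1g_2^{-1}=g_3g_4^{-1}$ into the two scalar equations and regrouping $\E(\mathcal{L})=\sum_{\sigma}r(\sigma)^2$ with $r(\sigma)=|\mathcal{L}\cap \mathcal{L}\sigma|$ are correct (up to the left/right multiplication convention). But everything after that is where the theorem actually lives, and there you have a genuine gap rather than a proof. Note first that the paper does not prove this statement at all: it quotes it from Petridis--Roche-Newton--Rudnev--Warren (cited as [PR-NRW, Theorem 2.1]), and the known proof there runs through Rudnev's point--plane incidence theorem in $\F^3$, not through a point--line incidence bound. Your plan to ``feed this incidence structure into Stevens--de Zeeuw'' does not describe an actual incidence problem: the translates $\sigma\mathcal{L}$ (or $\mathcal{L}\sigma$) are arbitrary point sets in $\F^*\times\F$, not lines, so the phrase ``the corresponding translate-line'' has no meaning; Stevens--de Zeeuw moreover requires the point set to be a Cartesian product, which $\mathcal{L}$ need not be; and you never specify which points and which lines are to be counted, nor how a first-moment incidence count would control the second moment $\sum_\sigma r(\sigma)^2$.

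The quantitative shape of the statement already signals that the point--line route cannot simply be ``packaged correctly.'' In the actual argument the energy is counted as incidences between a set of at most $m(\mathcal{L})|\mathcal{L}|$ points and $|\mathcal{L}|^2$ planes in $\F^3$; Rudnev's bound of the form $|P|^{1/2}|\Pi|+k|\Pi|$ then yields exactly $m^{1/2}(\mathcal{L})|\mathcal{L}|^{5/2}+M(\mathcal{L})|\mathcal{L}|^2$, with the collinearity parameter $k$ controlled by $M(\mathcal{L})$, and the hypothesis $m(\mathcal{L})|\mathcal{L}|\le p^2$ is precisely the constraint $|P|\ll p^2$ in Rudnev's theorem --- it is not, as you assert, the non-degeneracy threshold of a Szemer\'edi--Trotter-type bound. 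So the exponent $1/2$ on $m$, the exponent $1$ on $M$, and the $p^2$ condition all come out of a point--plane mechanism that your sketch never sets up; the sentence claiming that ``a single application of the point--line incidence theorem, together with a dyadic separation of heavy from light lines, yields the claimed estimate'' is exactly the content of the theorem and is missing. To complete the argument you should replace the Stevens--de Zeeuw step by the reduction to Rudnev's point--plane incidence theorem as in PR-NRW.
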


The following result is contained in \cite[Proposition 7]{s_He}. 
Here we have considered the case of the prime field (which is more difficult) and the case of the real numbers can be treated in analogues way. 

\begin{proposition}
    Let $A,B \subseteq \F$ be sets and $\mathcal{L}$ be a set of affine transformations over $\F$. 
    Then for any positive integer $k$ one has 
\begin{equation}\label{f:counting_collinear}
    \I (A\times B, \mathcal{L}) - \frac{|A||B||\mathcal{L}|}{|\F|} 
    \ll 
    \sqrt{|A||B| |\mathcal{L}|} \cdot 
    (\T_{2^k} (\mathcal{L}) |A| \log |A|)^{1/2^{k+2}} \,. 
\end{equation}
    More precisely, if $\F = \R$, then
\begin{equation}\label{f:counting_collinear_R}
    \I (A\times B, \mathcal{L}) \ll 
    \sqrt{|A||B|} |\mathcal{L}|^{1/3} \cdot 
    (\T_{2^k} (\mathcal{L}) |A| \log |A|)^{1/3\cdot 2^{k}} \,. 
\end{equation}
\label{p:counting_collinear}
\end{proposition}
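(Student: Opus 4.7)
The plan is to rewrite the discrepancy as a correlation with the balanced function $f_B$ and then to iterate Cauchy--Schwarz in the affine group $\Aff(\F)$ until the multiplicity function appearing in the estimate is precisely the one counting $\T_{2^k}(\mathcal{L})$.

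\textbf{Setup.} Since every $\ell \in \Aff(\F)$ is a bijection of $\F$, we have $\sum_{a\in\F} f_B(\ell(a)) = 0$, so
\[
\sigma := \I(A\times B,\mathcal{L}) - \frac{|A||B||\mathcal{L}|}{|\F|} = \sum_{\ell\in\mathcal{L}}\sum_{a\in\F}A(a)\,f_B(\ell(a)).
\]
An initial Cauchy--Schwarz in $\ell$ extracts the factor $\sqrt{|\mathcal{L}|}$ that appears outside the brackets in \eqref{f:counting_collinear}, and a second Cauchy--Schwarz in $a$, followed by the change of variables $y = \ell(a)$, collects pairs of lines by their quotient $g = \ell_2\ell_1^{-1}$ and reduces the estimate to a sum of the form $\sum_{g \in \Aff(\F)} d_1(g)\,\rho(g)$, where $\rho(g) := \sum_b f_B(b) f_B(g(b))$ and $d_1$ is the convolution $\mathcal{L} * \widetilde{\mathcal{L}}$ on $\Aff(\F)$.

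\textbf{Iteration.} Next, $k$ further rounds of Cauchy--Schwarz---each writing $\sum_g d_j(g)\rho(g)$ as an inner product against $f_B$, absorbing $\|f_B\|_2^2 \le |B|$, and expanding---double the length of the word in $\mathcal{L}$ at every step, producing a multiplicity function $d_k$ on $\Aff(\F)$ that satisfies $\sum_g d_k(g)^2 = \T_{2^k}(\mathcal{L})$; this is precisely the alternating-inverse structure of \eqref{def:T_k}. A final dyadic pigeonhole on the level sets of $d_k$ converts the remaining sum into a bound of the shape $\T_{2^k}(\mathcal{L})^{1/2}$ times a residual $|A|,|B|$ factor, and contributes the single logarithm $\log|A|$ which, after taking the $(2^{k+2})$-th root to recover $\sigma$, appears with the claimed exponent $1/2^{k+2}$.

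\textbf{Real case.} For $\F = \R$ the initial Cauchy--Schwarz in $\ell$ is replaced by a direct appeal to the Szemer\'edi--Trotter theorem on the incidences between $A\times B$ and $\mathcal{L}$. This substitutes $|\mathcal{L}|^{1/3}$ for $\sqrt{|\mathcal{L}|}$ and correspondingly rescales the exponent on $\T_{2^k}$ from $1/2^{k+2}$ to $1/(3 \cdot 2^k)$; the iterative phase of the argument goes through unchanged.

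\textbf{Main obstacle.} The principal difficulty is bookkeeping: one must verify that after $k$ rounds the multiplicity $d_k$ is exactly the one appearing in \eqref{def:T_k}, rather than some larger or non-matching energy on $\Aff(\F)$, and that the dyadic pigeonhole contributes only a \emph{single} logarithmic factor so that the final exponent on $\log|A|$ is $1/2^{k+2}$ and not a cruder power. A secondary subtlety is slotting the Szemer\'edi--Trotter bound into the real case cleanly, so as not to disturb the exponents on $|A|,|B|$ or the word structure that is being built up iteratively.
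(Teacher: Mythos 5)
Your overall skeleton --- passing to the balanced function $f_B$, writing the discrepancy as $\sigma=\sum_{l\in\mathcal{L}}\sum_{a}A(a)f_B(la)$, and iterating Cauchy--Schwarz so that alternating words $l_1l_2^{-1}\cdots$ of length $2^k$ in $\mathcal{L}$ appear --- is indeed the method behind this statement (the paper itself does not reprove it: it is quoted from \cite[Proposition 7]{s_He}, and the same scheme is reprised in the proofs of Theorems \ref{t:f_A_5} and \ref{t:incidences_new}, cf.\ \eqref{tmp:04.10_1*}). After $k$ rounds one indeed reaches an estimate of the shape $\sigma^{2^k}\le |A|^{2^{k-1}}|B|^{2^{k-1}-1}\sum_h r_{(\mathcal{L}^{-1}\mathcal{L})^{2^{k-1}}}(h)\,\rho(h)$ with $\rho(h)=\sum_x f_B(x)f_B(hx)$.

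The genuine gap is in your final step. A dyadic pigeonhole on the level sets of $d_k$, knowing only $\sum_h d_k(h)^2=\T_{2^k}(\mathcal{L})$, cannot produce the factor $(\T_{2^k}(\mathcal{L})\,|A|\log|A|)^{1/2^{k+2}}$: Cauchy--Schwarz in $h$ gives only $\T_{2^k}(\mathcal{L})^{1/2}\big(\sum_h\rho(h)^2\big)^{1/2}$, where the second factor over all of $\Aff(\F_p)$ is of order $p|B|^2$, and in any case this route yields the exponent $1/2^{k+1}$ on $\T_{2^k}$, not $1/2^{k+2}$. What is actually needed, and is absent from your sketch, is (i) an extra H\"older splitting of the representation function using $\sum_h r_{(\mathcal{L}^{-1}\mathcal{L})^{2^{k-1}}}(h)=|\mathcal{L}|^{2^k}$ --- this, not an initial Cauchy--Schwarz in $l$, is where the prefactor $\sqrt{|\mathcal{L}|}$ comes from --- and (ii) the nontrivial fourth--moment input $\sum_h|\rho(h)|^4\lesssim |B|^5\log|B|$ (equivalently the collinear--quadruple asymptotic \eqref{f:Q_4}, i.e.\ \eqref{f:collinear-}, which rests on Stevens--de Zeeuw), valid only for lines with at least two points in $B\times B$, so that the remaining lines must be handled separately as with the term $\sigma_*$ in the proof of Theorem \ref{t:incidences_new}. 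The single logarithm and the factor $|A|$ inside the bracket come from this moment bound, not from a pigeonhole. The same objection applies to your real case: one cannot simply ``substitute $|\mathcal{L}|^{1/3}$ for $\sqrt{|\mathcal{L}|}$'' by invoking Szemer\'edi--Trotter on $\I(A\times B,\mathcal{L})$ inside a Cauchy--Schwarz step; the Szemer\'edi--Trotter input enters through the third--moment (collinear triples) bound \eqref{f:Q_3_R} applied to $\rho$, which is what yields the exponents $|\mathcal{L}|^{1/3}$ and $1/(3\cdot 2^{k})$. A further small inaccuracy: pairs of lines with quotient $l_2l_1^{-1}$ arise from squaring the sum over $l$, so the first Cauchy--Schwarz must be taken in the point variable with the $l$--sum inside; as written, your order of the two initial applications does not produce the convolution $d_1$.
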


We need the following structural result on the higher energies, see \cite[Theorem 4]{s_Sidon}. 

\begin{theorem}
	Let $\Gr$ be an abelian group, $A\subseteq \Gr$ be a set, $\delta, \eps \in (0,1]$ be parameters, $\eps \le \delta$.\\
	$1)~$ Then there is $k=k(\d, \eps) = \exp(O(\eps^{-1} \log (1/\d)))$ such that either $\E^{}_k (A) \le |A|^{k+\delta}$ or there is $H\subseteq \Gr$, $|H| \gtrsim  |A|^{\delta(1-\eps)}$, $|H+H| \ll |A|^\eps |H|$ 
	and there exists $Z\subseteq \Gr$,  $|Z| |H| \ll |A|^{1+\eps}$ 
	%such that 
	with 
	$$|(H\dotplus Z) \cap A| \gg |A|^{1-\eps} \,.$$ 
	$2)~$ Similarly, either there is a set $A'\subseteq A$, $|A'| \gg |A|^{1-\eps}$ and $P\subseteq \Gr$, $|P| \gtrsim |A|^\d$ such that for all $x\in A'$ one has  $r_{A-P}(x) \gg |P| |A|^{-\eps}$ or $\E_k (A) \le |A|^{k+\d}$ with $k \ll 1/\eps$.
\label{t:Ek} 
\end{theorem}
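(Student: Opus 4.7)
\medskip

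\noindent
\textbf{Proof plan for Theorem \ref{t:Ek}.}
The backbone is a dyadic decomposition of the representation function $r_{A-A}$, combined with the Balog--Szemer\'edi--Gowers theorem and an iterative refinement of the doubling constant. I would prove part $(2)$ first, since it is essentially one application of pigeonholing, and then bootstrap to obtain part $(1)$.

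\medskip

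\noindent
\emph{Part $(2)$.} Suppose $\E_{k}(A) > |A|^{k+\delta}$. Split the levels of $r_{A-A}$ dyadically into $D_t := \{x : t \le r_{A-A}(x) < 2t\}$; the relation $\E_{k}(A) = \sum_x r_{A-A}^k(x)$ together with $\log|A|$ scales of $t$ yields a level $t$ with $|D_t|\, t^k \gtrsim |A|^{k+\delta}$. Combined with the trivial identity $\sum_x r_{A-A}(x) = |A|^2$ which gives $|D_t|\, t \le |A|^2$, one solves a two-variable linear system in $(\log|D_t|, \log t)$ to obtain $t \gtrsim |A|^{1+(\delta-1)/(k-1)}$ and $|D_t| \gtrsim |A|^\delta$. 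Choosing $k \ll 1/\eps$ forces $t \ge |A|^{1-\eps}$. Setting $P := D_t$, one computes
\[
\sum_{x\in A} r_{A-P}(x) \;=\; \sum_{p\in P} r_{A-A}(p) \;\ge\; t|P|,
\]
and bounds $r_{A-P}(x) \le \min(|A|,|P|)$. A Markov-type splitting at the threshold $\tfrac{1}{2} t|P|/|A|$ then yields a subset $A'\subseteq A$ of size $\gg |A|^{1-\eps}$ on which $r_{A-P}(x) \gg |P| |A|^{-\eps}$, which is exactly the required conclusion.

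\medskip

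\noindent
\emph{Part $(1)$.} Now I would iterate. First invoke part $(2)$ with some auxiliary parameters $(\delta_0,\eps_0)$ depending on $(\delta,\eps)$ to obtain $P_0$ and $A'_0$ as above. The set $P_0$ already has small doubling \emph{on average}: the lower bound on $r_{A-P_0}$ on $A'_0$ implies $\E(P_0, A) \gg |P_0|^2 |A|^{1-O(\eps_0)}$, hence a Balog--Szemer\'edi--Gowers application produces a subset $H_0 \subseteq P_0$ of size $|H_0| \gtrsim |P_0| |A|^{-O(\eps_0)}$ whose sumset $|H_0+H_0|$ is bounded by $|A|^{O(\eps_0)} |H_0|$ (one uses the graph-theoretic BSG together with Pl\"unnecke--Ruzsa \eqref{f:Pl-R} to transfer the small-doubling information). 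The associated covering $Z_0$ with $|Z_0||H_0| \ll |A|^{1+O(\eps_0)}$ and $|(H_0 \dotplus Z_0)\cap A| \gg |A|^{1-O(\eps_0)}$ then comes from a standard Ruzsa covering argument applied to $A$ inside $A+H_0-H_0$.

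\medskip

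\noindent
The BSG step worsens the doubling by a polynomial power, so a single application only gives the weaker doubling $|A|^{C\eps_0}$ rather than $|A|^\eps$. To drive the doubling down to $|A|^\eps$ one repeats the argument inside $H_0$ (replacing $A$ by $H_0$ and using that $H_0$ has large energy coming from its small sumset), each iteration dividing the effective doubling exponent by a constant factor while raising the required $k$ by a bounded multiplicative constant. Choosing the number of iterations to be $O(\eps^{-1} \log(1/\delta))$ both shrinks the doubling below $|A|^\eps$ and keeps the loss on $|H|$ down to the factor $|A|^{-\eps\delta}$ stated in the theorem, which accounts for the form $k = \exp\bigl(O(\eps^{-1}\log(1/\delta))\bigr)$. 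The main obstacle is precisely this iterative bookkeeping: one must carefully balance the energy drops at each BSG application against the parameter loss in the Pl\"unnecke--Ruzsa step so that both $|H|$ and $|(H\dotplus Z)\cap A|$ remain polynomially large in $|A|$ after all the iterations, and at the same time show that the accumulated error in the covering argument does not destroy the direct-sum structure $H\dotplus Z$.
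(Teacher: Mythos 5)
Two remarks before the substance. First, this paper never proves Theorem \ref{t:Ek}: it is imported from \cite[Theorem 4]{s_Sidon}, so your argument cannot be checked against an in-paper proof and has to stand on its own. Second, your part $(2)$ does stand on its own: the dyadic pigeonholing on the level sets of $r_{A-A}$, the two constraints $|D_t|t^{k}\gtrsim |A|^{k+\d}$ and $|D_t|t\le |A|^{2}$, the choice $k\asymp 1/\eps$ forcing $t\ge |A|^{1-\eps}$, the identity $\sum_{x\in A}r_{A-P}(x)=\sum_{p\in P}r_{A-A}(p)\ge t|P|$ and the Markov splitting are all correct, and this is the standard route to such a statement.

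Part $(1)$, however, has a genuine gap at its very first step, and it is not the ``iterative bookkeeping'' you flag at the end. From $r_{A-P_0}(x)\gg |P_0||A|^{-\eps_0}$ on $\gg |A|^{1-\eps_0}$ points you correctly get $\E(A,P_0)\gg |P_0|^{2}|A|^{1-O(\eps_0)}$, but the claim that Balog--Szemer\'edi--Gowers then yields $H_0\subseteq P_0$ with $|H_0|\gtrsim |P_0||A|^{-O(\eps_0)}$ and $|H_0+H_0|\le |A|^{O(\eps_0)}|H_0|$ is not a valid deduction: when $|P_0|\approx |A|^{\d}$ is much smaller than $|A|$, an energy that is near-maximal with respect to the ceiling $|A||P_0|^{2}$ carries no structural information about the small set. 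Take $A=\{1,\dots,n\}$ and $P$ a Sidon subset of $\{1,\dots,\lfloor n/2\rfloor\}$ of size $n^{\d}$ (with $\d<1/2$): then $r_{A-P}(x)=|P|$ for $\gg n$ values of $x$, so the hypothesis of your BSG step holds with loss $O(1)$, yet every $H\subseteq P$ has $|H+H|\gg |H|^{2}$, so no subset of size close to $|A|^{\d}$ has doubling $|A|^{O(\eps_0)}$. (This does not contradict the theorem --- for an interval the popular-difference set is itself an interval, so the structured alternative holds --- it shows your step uses too little information.) Quantitatively: the BSG normalization that does control the smaller set requires $\E(A,P_0)\ge (|A||P_0|)^{3/2}/K$, and your bound only gives $K\approx |A|^{(1-\d)/2}$, a polynomial loss that destroys every exponent; in the normalization $\E(A,P_0)\ge |A||P_0|^{2}/K$ one only gets $|A_1-P_1|\ll K^{C}|A|$ for large subsets, whence by the Ruzsa triangle inequality $|P_1-P_1|\ll |A|^{1+O(\eps_0)}$, i.e.\ doubling about $|A|^{1-\d}$ for a set of size $|A|^{\d}$, which is useless. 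So the structured alternative must be extracted from what part $(2)$ actually provides --- that $P_0$ consists of differences of multiplicity at least $|A|^{1-\eps_0}$, equivalently from the large intersection sets $A\cap(A+\a_1)\cap\dots$ encoded by the higher energy --- and not from the single energy $\E(A,P_0)$; the shape $k=\exp(O(\eps^{-1}\log(1/\d)))$ does signal an iteration in which $k$ is multiplied at each step (your outline guesses this correctly), but each step has to be driven by something other than the BSG application above. The remaining steps (a Ruzsa covering upgraded to a genuine direct sum $H\dotplus Z$ with $|Z||H|\ll |A|^{1+\eps}$ and $|(H\dotplus Z)\cap A|\gg |A|^{1-\eps}$, and the iteration inside $H_0$) are only declared, but they sit downstream of a first step that does not work, so as it stands part $(1)$ is not proved.
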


Thus either $\E_k(A) \le |A|^{k+\d}$, or $A$ has some strong structural properties depending on the parameters $\d$ and $\eps$. In the later case we say that $A$ is {\it $\E_k$--exceptional with $\d,\eps$.}

\bigskip 

We use a simplified version of 
%this result 
\cite[Theorem 5]{s_Bourgain}, as well as \cite[Lemma 9]{s_He} 
in our last section.

\begin{theorem}
    Let  $k\ge 2$ be an integer, $M\ge 1$ be a real number,
    $A,B\subseteq \F_p$ be sets, $|AB|\le M|A|$, and $|B| \gtrsim_k M^{2^{k+1}}$. 
    Then 
\begin{equation}\label{f:upper_T(AB)}
    \T^{+}_{2^k} (A) \lesssim_k M^{2^{k+1}} \left( \frac{|A|^{2^{k+1}}}{p}  +
    |A|^{2^{k+1}-1} \cdot |B|^{-\frac{k-1}{2}}  \right) \,.  
\end{equation}
\label{t:upper_T(AB)+}
\end{theorem}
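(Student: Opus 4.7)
The plan is to combine Fourier analysis on $\F_p$ with the multiplicative Pl\"unnecke--Ruzsa inequality. By Plancherel,
\[
\T^{+}_{2^k}(A) \;=\; \frac{1}{p} \sum_{\xi \in \F_p} |\widehat{A}(\xi)|^{2^{k+1}}\,,
\]
and the $\xi = 0$ term is precisely $|A|^{2^{k+1}}/p$, which supplies the first summand of \eqref{f:upper_T(AB)}. Everything therefore reduces to bounding $p^{-1}\sum_{\xi \neq 0} |\widehat{A}(\xi)|^{2^{k+1}}$.

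For the non-zero frequencies I would proceed by induction on $k$, implementing a doubling step that saves $|B|^{-1/2}$ at each stage. Multiplicative Pl\"unnecke--Ruzsa applied to $|AB| \le M|A|$ gives $|B^n A| \le M^n |A|$ for every $n \le 2^{k+1}$, so all intermediate product sets are under control. The central tool is the dilation identity $\widehat{bA}(\xi) = \widehat{A}(b\xi)$, which converts a multiplicative shift by $b \in B$ into a frequency shift. Averaging $|\widehat{A}(\xi)|^{2^{k+1}}$ over $b \in B$, applying Cauchy--Schwarz, and estimating the resulting correlation by Parseval on the small set $BA$ yields a recursion of the form
\[
\T^{+}_{2^{k+1}}(A) - \frac{|A|^{2^{k+2}}}{p} \;\lesssim\; M^{C}\,\Bigl(\T^{+}_{2^k}(A) - \frac{|A|^{2^{k+1}}}{p}\Bigr) \cdot \frac{|A|}{|B|^{1/2}}\,.
\]
Iterating from the base case $k=1$ (a direct sum--product estimate for $\E^{+}(A)$ under the multiplicative doubling hypothesis) accumulates the advertised factor $|B|^{-(k-1)/2}$ together with the multiplicative loss $M^{2^{k+1}}$.

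The main obstacle is to keep the bookkeeping tight across the $k$ doubling steps: each Cauchy--Schwarz must be set up so that the saving is exactly $|B|^{-1/2}$ with no logarithmic leakage, and the powers of $M$ must compound to at worst $M^{2^{k+1}}$. The hypothesis $|B| \gtrsim_k M^{2^{k+1}}$ is precisely what is needed to keep the error term subdominant at the final level of the induction, so that the recursion closes. Since Theorem \ref{t:upper_T(AB)+} is advertised as a simplified version of \cite[Theorem~5]{s_Bourgain} and \cite[Lemma~9]{s_He}, the expectation is that the heavy combinatorial identities are imported from those papers and that the remaining work consists of tracking the quantitative dependence on $M$ and $|B|$ in the cleaner statement.
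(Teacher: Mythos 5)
First, a point of comparison: the paper does not prove Theorem \ref{t:upper_T(AB)+} at all — it is imported as a simplified version of \cite[Theorem 5]{s_Bourgain} and \cite[Lemma 9]{s_He} — so the only question is whether your sketch stands on its own, and it does not: the gap sits exactly where the theorem's content lies, in the doubling step. The Plancherel identity $\T^{+}_{2^k}(A)=p^{-1}\sum_{\xi}|\widehat{A}(\xi)|^{2^{k+1}}$ and the extraction of the main term $|A|^{2^{k+1}}/p$ from $\xi=0$ are fine, but the mechanism you propose for the $|B|^{-1/2}$ gain — averaging the dilation identity $\widehat{bA}(\xi)=\widehat{A}(b\xi)$ over $b\in B$, then Cauchy--Schwarz and Parseval on the small set $AB$ — cannot by itself produce any saving: for each fixed $b\neq 0$ the map $\xi\mapsto b\xi$ permutes the nonzero frequencies, so the average over $b\in B$ of $\sum_{\xi\neq 0}|\widehat{A}(b\xi)|^{2^{k+1}}$ is an identity, and the hypothesis $|AB|\le M|A|$ never actually enters through this route. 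In the cited sources the per-step saving comes from a genuine combinatorial input (energy/incidence estimates for sets with small multiplicative doubling, ultimately resting on point--plane incidence technology, with Pl\"unnecke only controlling the intermediate product sets), and this is precisely the part you defer to the literature; as written, the argument assumes the engine of the theorem rather than proving it.

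Moreover, the recursion you state is quantitatively impossible, which shows the inductive scheme has not really been set up. Take $A=B$ a multiplicative subgroup of size about $p^{1/8}$, so $M=1$ and $|A|^{8}/p\le 1$. Your recursion with the trivial input $\E^{+}(A)\le |A|^3$ would give $\T^{+}_{4}(A)\lesssim |A|^{3}\cdot |A|/|B|^{1/2}=|A|^{7/2}$, contradicting the diagonal lower bound $\T^{+}_{4}(A)\ge |A|^4$. The step from $\T^{+}_{2^k}$ to $\T^{+}_{2^{k+1}}$ doubles the number of variables from $2^{k+1}$ to $2^{k+2}$, so any correct recursion must carry a factor of order $|A|^{2^{k+1}}|B|^{-1/2}$, not $|A||B|^{-1/2}$. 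Finally, the base case requires no sum--product input: at $k=1$ the claimed bound reads $\E^{+}(A)\lesssim M^{4}(|A|^4/p+|A|^3)$, which is trivial; all of the difficulty is in the iterative steps, and those are exactly what remain unproved in your proposal.
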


\begin{lemma}
    Let $A,B \subseteq \F^*_p$ be sets, %$K=|AA|/|A|$ 
    and $\mathcal{L} = \{ (a,b) ~:~ a\in A,\, b\in B\} \subseteq \Aff(\F_p)$. 
    Then for any $k\ge 2$ one has 
\begin{equation}\label{f:T_k-E(L)}
    \T_k (f_{\mathcal{L}}) \le  |A|^{2k-1} \T^{+}_k (f_B) \,.
\end{equation}
\label{l:T_k-E(L)}
\end{lemma}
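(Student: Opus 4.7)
The plan is to exploit the semidirect product structure $\Aff(\F_p) \cong \F_p \rtimes \F^*_p$, writing an affine map $x\mapsto ax+b$ as a pair $(a,b)$ with multiplication $(a,b)(c,d) = (ac,\,ad+b)$ and inverse $(a,b)^{-1} = (a^{-1},\,-a^{-1}b)$. Under this description the defining energy equation $g_1 g_2^{-1}\cdots g_{k-1}g_k^{-1} = g_{k+1} g_{k+2}^{-1}\cdots g_{2k-1}g_{2k}^{-1}$ in $\T_k$ decouples into two coupled conditions: a multiplicative constraint on the first coordinates $a_1 a_2^{-1}\cdots a_{k-1}a_k^{-1} = a_{k+1}a_{k+2}^{-1}\cdots a_{2k-1}a_{2k}^{-1}$, and an $\F_p$-linear constraint $\sum_{i=1}^{2k}\alpha_i b_i = 0$ on the second coordinates, where each coefficient $\alpha_i = \alpha_i(a_1,\dots,a_{2k})$ is an explicit Laurent monomial in the $a_j$'s (direct computation gives, on the left side, $\alpha_1=1$, $\alpha_2=-a_1/a_2$, $\alpha_3=a_1/a_2$, $\alpha_4=-a_1 a_3/(a_2 a_4)$, etc., with analogous expressions on the right). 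The structural observation driving the proof is that the hypothesis $A\subseteq\F^*_p$ forces every $\alpha_i$ to be nonzero.

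Writing $f_\mathcal{L}(a,b) = A(a)\,f_B(b)$, the natural partial balance in the $b$-fibre (which is what arises in the proof of Proposition \ref{p:counting_collinear}), one splits
\[
\T_k(f_\mathcal{L}) = \sum_{(a_1,\dots,a_{2k})\in A^{2k}} \bigl[\text{first-coord.\ eqn}\bigr]\cdot S(a_1,\dots,a_{2k}),\qquad S(a) := \sum_{\substack{b\in\F_p^{2k}\\ \sum_i \alpha_i(a) b_i=0}}\prod_{i=1}^{2k} f_B(b_i).
\]
The heart of the argument is the fibrewise bound $|S(a)|\le\T^+_k(f_B)$, valid for every admissible tuple $a$. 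Expanding $S(a)$ via orthogonality on $\F_p$ gives
\[
S(a)=\frac{1}{p}\sum_{t\in\F_p}\prod_{i=1}^{2k}\widehat{f_B}(\alpha_i(a)\,t).
\]
The term $t=0$ vanishes because $\widehat{f_B}(0)=\sum_b f_B(b)=0$, and for $t\neq 0$ every $\alpha_i(a)\,t$ is nonzero by the key observation above. Applying the triangle inequality followed by the AM--GM bound $\prod_{i=1}^{2k} y_i \le (2k)^{-1}\sum_i y_i^{2k}$ (for $y_i\ge 0$) to $y_i = |\widehat{f_B}(\alpha_i(a)\,t)|$ gives
\[
|S(a)| \le \frac{1}{2kp}\sum_{i=1}^{2k}\sum_{t\in\F_p}|\widehat{f_B}(\alpha_i(a)\,t)|^{2k}.
\]
For each fixed $i$, the substitution $s=\alpha_i(a)\,t$ is a bijection of $\F_p$, so the inner sum equals $\sum_s|\widehat{f_B}(s)|^{2k}=p\,\T^+_k(f_B)$ by the standard Parseval-type identity $\T^+_k(f_B)=\frac{1}{p}\sum_s|\widehat{f_B}(s)|^{2k}$, yielding $|S(a)|\le\T^+_k(f_B)$.

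Finally, the multiplicative constraint on the first coordinates is a single equation in $\F^*_p$: after freely choosing $a_1,\dots,a_{2k-1}\in A$, the element $a_{2k}\in\F^*_p$ is uniquely determined, so the number of admissible tuples is at most $|A|^{2k-1}$. Combining the two estimates,
\[
\T_k(f_\mathcal{L}) \le |A|^{2k-1}\cdot\T^+_k(f_B),
\]
as claimed. The main conceptual step is the Fourier/AM--GM argument producing the fibrewise bound $|S|\le\T^+_k(f_B)$; once the structural point that $A\subseteq\F^*_p$ makes all coefficients $\alpha_i$ nonzero is in place, the rest is bookkeeping. The only mildly delicate part is tracking the exact shape of the monomials $\alpha_i$, but since we only need them nonzero (not their precise values) this presents no real obstacle.
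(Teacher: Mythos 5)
Your proof is correct, and it follows the standard route for this estimate: the paper itself gives no proof of Lemma \ref{l:T_k-E(L)}, importing it from \cite[Lemma 9]{s_He}, where the argument is exactly of the kind you describe — decouple the energy equation in $\Aff(\F_p)\cong \F_p\rtimes\F_p^*$ into a multiplicative constraint on slopes (giving the factor $|A|^{2k-1}$, since the last slope is determined by the others) and, for each fixed admissible slope tuple, a single linear equation $\sum_i\alpha_i b_i=0$ with all $\alpha_i\ne 0$, whose weighted solution count is bounded by $\T_k^{+}(f_B)$ via orthogonality, the vanishing of $\widehat{f_B}(0)$, and AM--GM (equivalently H\"older) together with the identity $\T^{+}_k(f_B)=p^{-1}\sum_s|\widehat{f_B}(s)|^{2k}$. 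All steps check out, including the two points that actually need care: that $0\notin A$ makes every coefficient $\alpha_i$ a nonzero Laurent monomial, and that the signs in \eqref{def:T_k} distribute so that each $t\ne 0$ contributes $\prod_i|\widehat{f_B}(\alpha_i t)|$ which rescales to $|\widehat{f_B}(s)|^{2k}$ termwise.
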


\section{The proof of the main result}
\label{sec:proof}

First of all, combining Theorem \ref{t:Aff_energy} and Proposition \ref{p:counting_collinear}, we obtain  a result on "rich"\, lines (i.e., lines having large intersections with $A\times B$) in terms of the quantities $m(\mathcal{L})$ and $M(\mathcal{L})$. 
We follow the method from \cite{RS_SL2} and \cite{PR-NRW}. 
Given a set $\mathcal{L}$ of affine transformations over $\F$ %$\mathcal{L}$  
and a real number $\tau \ge 2$ put 
\[
    \mathcal{L}_\tau = \L_\tau (A) := \{ l\in \mathcal{L} ~:~ |l\cap (A\times B)| \ge \tau \} \,.
\]
Formulae \eqref{f:S_t_Aff_1}, \eqref{f:S_t_Aff_2} below can be treated as an alternative (to the Szemer\'edi--Trotter type estimates) upper bound for size of $|\mathcal{L}_\tau|$.

\begin{corollary}
    Let $A,B \subseteq \F$ be sets and $\mathcal{L}$ be a set of affine transformations over $\F$. 
    Also, let $\tau\ge 1$ be a real number, $\frac{2|A||B|}{|\F|} \le \tau$ and $m(\mathcal{L}_\tau) |\mathcal{L}_\tau| \le |\F|^2$.
    %\le m:=\min \{|A|,|B|\}$. 
    Then 
\begin{equation}\label{f:S_t_Aff_1}
    |\mathcal{L}_\tau| \ll \tau^{-16/3} m^{1/3} (\mathcal{L}_\tau) |A|^{10/3} |B|^{8/3} \log^{2/3} |A| 
    + \tau^{-4} M^{1/2} (\mathcal{L}_\tau) |A|^{5/2} |B|^2 \log^{1/2} |A| \,.
\end{equation}
%    provided $m|\mathcal{L}_\tau| \le |\F|^2$ and $|\mathcal{L}_\tau| \ge m$. 
    More precisely, if $\F = \R$, then
\begin{equation}\label{f:S_t_Aff_2}
    |\mathcal{L}_\tau| \ll \tau^{-4} (m (\mathcal{L}_\tau) |A|^8 |B|^6 \log^2 |A|)^{1/3} + \tau^{-3} M^{1/2} (\mathcal{L}_\tau) |A|^2 |B|^{3/2} \log^{1/2} |A| \,. 
\end{equation}
\label{c:S_t_Aff}
\end{corollary}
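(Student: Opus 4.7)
The plan is a direct combination of Proposition \ref{p:counting_collinear} (with $k=1$) and Theorem \ref{t:Aff_energy}. By definition of $\mathcal{L}_\tau$ we have the trivial lower bound $\I(A\times B, \mathcal{L}_\tau) \ge \tau |\mathcal{L}_\tau|$, while the hypothesis $2|A||B|/|\F|\le \tau$ forces the ``main term'' $|A||B||\mathcal{L}_\tau|/|\F|$ in Proposition \ref{p:counting_collinear} to be at most half of this, so it can be absorbed. Thus in the finite-field setting I get
\[
    \tau |\mathcal{L}_\tau| \ll \sqrt{|A||B||\mathcal{L}_\tau|}\cdot \bigl(\T_2(\mathcal{L}_\tau)\,|A|\log|A|\bigr)^{1/8},
\]
which after raising to the $8$th power and tidying becomes
\[
    \tau^{8}|\mathcal{L}_\tau|^{4} \ll |A|^{5}|B|^{4}\,\T_2(\mathcal{L}_\tau)\log|A|.
\]

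Now I would plug in Theorem \ref{t:Aff_energy}, which, under the assumption $m(\mathcal{L}_\tau)|\mathcal{L}_\tau|\le |\F|^2$, gives $\T_2(\mathcal{L}_\tau)=\E(\mathcal{L}_\tau)\ll m^{1/2}(\mathcal{L}_\tau)|\mathcal{L}_\tau|^{5/2}+M(\mathcal{L}_\tau)|\mathcal{L}_\tau|^{2}$, and split into two cases depending on which term dominates. In the first case, isolating $|\mathcal{L}_\tau|^{3/2}$ produces the first summand $\tau^{-16/3}m^{1/3}|A|^{10/3}|B|^{8/3}\log^{2/3}|A|$ of \eqref{f:S_t_Aff_1}; in the second case, isolating $|\mathcal{L}_\tau|^{2}$ yields the second summand $\tau^{-4}M^{1/2}|A|^{5/2}|B|^{2}\log^{1/2}|A|$. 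For \eqref{f:S_t_Aff_2} the exact same argument is run, but starting instead from the real-valued bound \eqref{f:counting_collinear_R} of Proposition \ref{p:counting_collinear} with $k=1$, which gives
\[
    \tau |\mathcal{L}_\tau| \ll \sqrt{|A||B|}\,|\mathcal{L}_\tau|^{1/3}\bigl(\T_2(\mathcal{L}_\tau)\,|A|\log|A|\bigr)^{1/6};
\]
raising to the $6$th power and substituting the two branches of Theorem \ref{t:Aff_energy} yields the two summands of \eqref{f:S_t_Aff_2}.

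There is essentially no obstacle here: the ``hard'' ingredients (incidence counting in terms of $\T_{2^k}$, and the non-trivial energy bound for subsets of $\Aff(\F)$) have already been quoted, and the role of the corollary is just to couple them through a standard popularity/dyadic-style argument. The only point one should watch is the hypothesis $m(\mathcal{L}_\tau)|\mathcal{L}_\tau|\le |\F|^2$ needed to legally invoke Theorem \ref{t:Aff_energy}, and the need to verify that the ``expected'' incidence contribution $|A||B||\mathcal{L}_\tau|/|\F|$ is indeed dominated by $\tau|\mathcal{L}_\tau|$, which is exactly what the assumption $2|A||B|/|\F|\le \tau$ guarantees.
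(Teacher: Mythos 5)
Your argument is correct and is essentially the paper's own proof: both apply Proposition \ref{p:counting_collinear} with $k=1$ (using $2|A||B|/|\F|\le\tau$ to absorb the main term), bound $\E(\mathcal{L}_\tau)=\T_2(\mathcal{L}_\tau)$ via Theorem \ref{t:Aff_energy} under the hypothesis $m(\mathcal{L}_\tau)|\mathcal{L}_\tau|\le|\F|^2$, and then solve for $|\mathcal{L}_\tau|$ term by term, with \eqref{f:counting_collinear_R} replacing \eqref{f:counting_collinear} in the real case. Your explicit two-case bookkeeping after raising to the $8$th (resp.\ $6$th) power is exactly the algebra the paper leaves implicit in the phrase ``the last inequality is equivalent to \eqref{f:S_t_Aff_1}.''
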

\begin{proof} 
    Let $m=m(\mathcal{L}_\tau)$ and $M= M(\mathcal{L}_\tau)$. 
    By Proposition \ref{p:counting_collinear} with $k=1$, the definition of the set $\mathcal{L}_\tau$ and our condition $\frac{2|A||B|}{|\F|} \le \tau$, we get 
\[
    \tau |\mathcal{L}_\tau| \ll \sqrt{|A| |B| |\mathcal{L}_\tau|} \cdot 
    (\E (\mathcal{L}_\tau) |A| \log |A|)^{1/8} \,.
\]
%    Thanks to our assumptions $m|\mathcal{L}_\tau| \le |\F|^2$, $|\mathcal{L}_\tau| \ge m$  
    We now apply Theorem \ref{t:Aff_energy} and obtain 
\begin{equation}\label{f:L_tau_m,M_E}
    \tau |\mathcal{L}_\tau| \ll \sqrt{|A| |B| |\mathcal{L}_\tau|} (|A| \log |A|)^{1/8}
    \cdot 
    (m^{1/2} |\mathcal{L}_\tau|^{5/2} + M |\mathcal{L}_\tau|^{2})^{1/8} \,.
\end{equation}
    The last inequality is equivalent to \eqref{f:S_t_Aff_1}.
    To obtain estimate \eqref{f:S_t_Aff_2} we just use formula  \eqref{f:counting_collinear_R} instead of \eqref{f:counting_collinear}.
%as required. 
    This completes the proof. 
$\hfill\Box$
\end{proof}

\bigskip 

%We estimate 
Now let us give simple upper bounds for 
the quantities  $m(\mathcal{L}_\tau)$, $M(\mathcal{L}_\tau)$.

\begin{lemma}
    Let $A,B \subseteq \F$ be sets, $\tau \ge 2$ and $\mathcal{L}$ be a set of affine transformations over $\F$. 
    Then for any integer $k\ge 2$ one has 
    $$
        m(\mathcal{L}_\tau) \le \tau^{-k} (\E^{+}_k (A) \E^{+}_k (B))^{1/2} 
        \quad \quad 
            \mbox{ and }
        \quad \quad 
        M(\mathcal{L}_\tau) \le \tau^{-k}
        (\ov{\E}^{\times}_k (A) \ov{\E}^{\times}_k (B))^{1/2} \,.
    $$
\label{l:m,M_bounds}
\end{lemma}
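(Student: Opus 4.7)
Both inequalities rest on the same Cauchy--Schwarz device: a line in the parameter space $\F^*\times\F$ carves out a one-parameter pencil of lines in the $xy$-plane, and the incidence count $|\ell\cap(A\times B)|$ along such a pencil becomes the value of a representation function---additive in the vertical case, multiplicative (after a translation) in the non-vertical one. Bounding the number of ``rich'' values of the parameter by the $k$-th moment of the representation function and applying the usual expansion in terms of common shifts then expresses everything in terms of the higher energies $\E^+_k$ and $\E^\times_k$.

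\emph{Vertical lines.} Fix $a_0\in\F^*$. For $\ell=(a_0,b)$ one has $|\ell\cap(A\times B)|=r_{B-a_0 A}(b)$, so if $m$ values of $b$ produce such $\ell\in\mathcal{L}_\tau$, then $m\tau^k\le\sum_b r^k_{B-a_0 A}(b)$. Expanding the $k$-th power as a sum over $k-1$ common additive shifts, as in \eqref{def:E_k}, and separating the $A$- and $B$-contributions, Cauchy--Schwarz gives
$$\sum_b r^k_{B-a_0 A}(b)\le(\E^+_k(a_0 A)\E^+_k(B))^{1/2}=(\E^+_k(A)\E^+_k(B))^{1/2}$$
by dilation invariance of additive energy. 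Maximising over $a_0$ yields the first estimate.

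\emph{Non-vertical lines.} A non-vertical line in $\F^*\times\F$ has the form $b=\rho a+\sigma$ and parameterises affine maps $y=a(x+\rho)+\sigma$, all passing through $(-\rho,\sigma)$ in the plane. The translation $(x,y)\mapsto(x+\rho,y-\sigma)$ turns this pencil into the family of lines through the origin and replaces $A\times B$ by $(A+\rho)\times(B-\sigma)$. For slope $a\in\F^*$ the intersection count is then $r_{(B-\sigma)/(A+\rho)}(a)$, interpreted multiplicatively, up to a $+1$ correction at the origin that arises only when $(-\rho,\sigma)\in A\times B$ and is absorbed by $\tau\ge 2$. Repeating the Cauchy--Schwarz argument in the group $(\F^*,\times)$ gives $\sum_a r^k(a)\le(\E^\times_k(A+\rho)\E^\times_k(B-\sigma))^{1/2}\le(\ov{\E}^\times_k(A)\ov{\E}^\times_k(B))^{1/2}$ by the definition of $\ov{\E}^\times_k$. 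Maximising over $\rho,\sigma$ gives the second estimate.

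The only technical subtlety is the $(0,0)$ bookkeeping in the multiplicative step; everything else is a mechanical parallel of the additive argument.
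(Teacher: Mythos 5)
Your proof is correct and follows essentially the same route as the paper: isolate the $m$ (resp.\ $M$) lines as a parallel pencil (resp.\ a pencil through a common point), write the incidence count as an additive (resp.\ multiplicative, after translating by the concurrency point) representation function, and pass to the $k$-th moment, which splits by Cauchy--Schwarz over common shifts into $(\E^{+}_k(A)\E^{+}_k(B))^{1/2}$, resp.\ $(\ov{\E}^{\times}_k(A)\ov{\E}^{\times}_k(B))^{1/2}$, using dilation/translation invariance. The only cosmetic difference is that the paper reaches the same inequality via H\"older ($\sum_\beta r(\beta)\le m^{1-1/k}(\sum_\beta r^k(\beta))^{1/k}$) rather than your direct comparison $m\tau^k\le\sum_\beta r^k(\beta)$, and your remark about the $+1$ correction at the concurrency point is a point the paper silently glosses over.
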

\begin{proof} 
    Let $m=m(\mathcal{L}_\tau)$ and $M=M(\mathcal{L}_\tau)$. 
    By the definition of the quantity $m$ we can find $m$ lines in $\mathcal{L}_\tau$ having the form $\a x +\beta$, where $\a\neq 0$ is a fixed number and $\beta$ runs over a set $\mathcal{B}$ of cardinality $m$. 
    Using the definition of the set $\mathcal{L}_\tau$ and 
    %the Cauchy--Schwarz inequality, 
    the H\"older inequality, 
    we obtain 
\[
    \tau m \le \sum_{\beta \in \mathcal{B}}\, \sum_{x\in B} A(\a x+\beta) = \sum_{\beta \in \mathcal{B}} r_{\a B-A} (\beta) 
    \le m^{1-1/k} (\E^{+}_k (\a B,A))^{1/k} \le m^{1-1/k} (\E^{+}_k (A) \E^{+}_k (B) )^{1/2k} \,.
\]
    Similarly, by the definition of the quantity $M$ we find $M$ lines concurrent at a certain  point $(x_0,y_0)$.
    We parameterise them by their slopes: $\beta = \frac{y-y_0}{x-x_0}$, $\beta \in \mathcal{B}$, $|\mathcal{B}| = M$. Hence as above
\[
    \tau M \le \sum_{\beta \in \mathcal{B}}\, r_{(A-y_0)/(B-x_0)} (\beta) \le  M^{1-1/k} (\E^{\times} (A-y_0,B-x_0))^{1/2} 
    \le M^{1-1/k} (\ov{\E}^{\times}_k (A) \ov{\E}^{\times}_k (B))^{1/2k} \,.
\]
%as required. 
    This completes the proof. 
$\hfill\Box$
\end{proof}

\bigskip 

Now we are ready to obtain our main result in the case of the real field and a result on $\Cf_5 (A,B)$ for small $A$ and $B$ in the case of the prime field.

\begin{theorem}
    Let $A,B\subset \mathbb{R}$ be sets.
    %, $|B| \le |A|$. 
    %, $|B| \le |A|$.
    Then 
    %for $k\ge 4$ one has 
\[
    \Cf_4 (A,B) - |B| |A|^4 - |A| |B|^4 
    \ll 
    |B| |A|^3 + |A| |B|^3 + 
\]
\begin{equation}\label{f:paucity_R}
%    \ll
     (\E^{+} (A) \E^{+} (B))^{1/10} 
     \Cf^{2/5}_3 (A,B) |A|^{8/5} |B|^{6/5} \log^{2/5} |A| + 
    (\ov{\E}^{\times}(A) \ov{\E}^{\times}(B))^{1/4} |A|^2 |B|^{3/2} \log^{3/2} |A| \,.
\end{equation}
    If $A,B \subseteq \F_p$ such that $\Cf^2_4 (A,B) \log |A| \le p |A|^5 |B|^4$ and $|A|^5 |B|^6 \le p^9$, then 
\[
    \Cf_5 (A,B) - |B| |A|^5 - |A| |B|^5 
    \ll 
    |B| |A|^4 + |A| |B|^4 + \Cf'_5 (A,B) \,,
\]
where the error term $\Cf'_5 (A,B)$  is at most 
\begin{equation}\label{f:paucity}
    (\E^{+} (A) \E^{+} (B))^{1/12} \Cf^{1/2}_4 (A,B) |A|^{5/3} |B|^{4/3} \log^{1/3} |A|
        +
    (\ov{\E}^{\times}(A) \ov{\E}^{\times}(B))^{1/4} |A|^{5/2} |B|^2 \log^{3/2} |A| \,.
\end{equation}
\label{t:paucity} 
\end{theorem}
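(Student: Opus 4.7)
The plan is a standard dyadic decomposition of $\sum_{l} i_{A,B}^k(l)$, combined with Corollary~\ref{c:S_t_Aff} and Lemma~\ref{l:m,M_bounds}, augmented by an interpolation against the elementary bound $|\mathcal{L}_\tau| \le \Cf_{k-1}(A,B)/\tau^{k-1}$ in order to match the exponents appearing in the statement.

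First, by \eqref{def:Q_k_i} I write $\Cf_k(A,B) = \sum_{l : i_{A,B}(l) \ge 2} i_{A,B}^k(l) + O_k(|A|^2|B|^2 \min(|A|,|B|)^{k-3})$. Horizontal lines $y = c$ with $c \in B$ contribute exactly $|B|\cdot|A|^k$, and vertical lines $|A|\cdot|B|^k$; the corrections between $|A|^k$ and $|A|(|A|-1)\cdots(|A|-k+1)$ (and symmetrically in $|B|$), plus the error in \eqref{def:Q_k_i}, produce the lower-order terms $|B||A|^{k-1}+|A||B|^{k-1}$ of the bound. What remains is a sum over $\mathcal{L}\subseteq \Aff(\F)\simeq \F^{*}\times \F$, which after a dyadic split reduces to estimating $\tau^k|\mathcal{L}_\tau|$ uniformly for dyadic $\tau\ge 2$, with an overall factor of $O(\log|A|)$ from the number of levels.

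For the $M$-term of Corollary~\ref{c:S_t_Aff}, the crude estimate $M(\mathcal{L}_\tau)\le\tau^{-2}(\ov{\E}^{\times}(A)\ov{\E}^{\times}(B))^{1/2}$ from Lemma~\ref{l:m,M_bounds} with $k=2$ is sufficient: substitution eliminates all powers of $\tau$ from $\tau^4\cdot\tau^{-3}M^{1/2}(\mathcal{L}_\tau)|A|^2|B|^{3/2}\log^{1/2}|A|$ in the $\R$, $k=4$ case, and from $\tau^5\cdot\tau^{-4}M^{1/2}(\mathcal{L}_\tau)|A|^{5/2}|B|^2\log^{1/2}|A|$ in the $\F_p$, $k=5$ case, and dyadic summation yields the $(\ov{\E}^{\times}(A)\ov{\E}^{\times}(B))^{1/4}$-terms appearing in \eqref{f:paucity_R} and \eqref{f:paucity}.

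The $m$-term is the decisive step. Lemma~\ref{l:m,M_bounds} alone gives the right power of $\E^{+}$ but the wrong size factors and no $\Cf_{k-1}$ factor. The remedy is to interpolate against
\[
|\mathcal{L}_\tau|\,\tau^{k-1} \le \sum_{l\in\mathcal{L}_\tau} i_{A,B}^{k-1}(l) \le \Cf_{k-1}(A,B),
\]
which is trivial since each $l\in\mathcal{L}_\tau$ supports at least $\tau^{k-1}$ ordered collinear $(k-1)$-tuples. Let $X_E$ be the bound on $|\mathcal{L}_\tau|$ coming from the $m$-term of Corollary~\ref{c:S_t_Aff} after inserting $m(\mathcal{L}_\tau)\le\tau^{-2}(\E^{+}(A)\E^{+}(B))^{1/2}$, and let $X_C := \Cf_{k-1}/\tau^{k-1}$. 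Then $|\mathcal{L}_\tau|\le X_E^{\theta}X_C^{1-\theta}$ for every $\theta\in[0,1]$, and a short computation shows that the $\tau$-exponent of $\tau^k|\mathcal{L}_\tau|$ equals $1-\tfrac{5}{3}\theta$ in the $\R$, $k=4$ case and $1-2\theta$ in the $\F_p$, $k=5$ case. Choosing $\theta = 3/5$ and $\theta = 1/2$ respectively eliminates the $\tau$-dependence, and the resulting constants $(\E^{+}\E^{+})^{\theta/6}$, $\Cf_{k-1}^{1-\theta}$, $|A|^{(8/3)\theta}$ (resp. $|A|^{(10/3)\theta}$), and $|B|^{2\theta}$ (resp. $|B|^{(8/3)\theta}$) reproduce exactly the energy exponents $1/10, 1/12$, the $\Cf_{k-1}$ exponents $2/5, 1/2$, and the stated size factors $|A|^{8/5}|B|^{6/5}$ and $|A|^{5/3}|B|^{4/3}$. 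The extra $\log|A|$ from the dyadic sum is absorbed into the log factors.

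The main technical nuisance, peculiar to the $\F_p$ statement, is verifying the hypotheses of Corollary~\ref{c:S_t_Aff} at each level: namely $\tfrac{2|A||B|}{p}\le\tau$ (handled by restricting to $\tau\gg |A||B|/p$ and absorbing smaller $\tau$ into the lower-order error, which is what the assumption $|A|^5|B|^6\le p^9$ is for) and $m(\mathcal{L}_\tau)\,|\mathcal{L}_\tau|\le p^2$ (verified from $m\le\Cf_3/\tau^3$ together with the hypothesis $\Cf_4^2(A,B)\log|A|\le p|A|^5|B|^4$).
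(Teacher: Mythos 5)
Your overall strategy is the paper's: remove horizontal/vertical lines, decompose $\sum_l i^k_{A,B}(l)$ dyadically, bound $|\mathcal{L}_\tau|$ by Corollary \ref{c:S_t_Aff} together with Lemma \ref{l:m,M_bounds} (with $k=2$), and balance against the trivial bound $|\mathcal{L}_\tau|\le \Cf_{k-1}(A,B)\tau^{-(k-1)}$; your exponent bookkeeping ($\theta=3/5$ and $\theta=1/2$) reproduces the stated powers of $\E^{+}$, $\Cf_{k-1}$, $|A|$ and $|B|$. The gap is in how you perform the balancing. With $\theta$ chosen so that $\tau^k X_E^{\theta}X_C^{1-\theta}$ is independent of $\tau$, every one of the $\sim\log|A|$ dyadic levels contributes the same balanced quantity, so your $m$-term comes out as $\Cf_3^{2/5}(\E^{+}(A)\E^{+}(B))^{1/10}|A|^{8/5}|B|^{6/5}\log^{7/5}|A|$ (and $\log^{4/3}|A|$ in place of $\log^{1/3}|A|$ in the $\F_p$ case). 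This extra $\log|A|$ cannot be ``absorbed'': the powers $\log^{2/5}|A|$ and $\log^{1/3}|A|$ in \eqref{f:paucity_R} and \eqref{f:paucity} are exactly the log-free balanced values, so as written you prove a strictly weaker inequality than the one stated. The paper avoids this by introducing a single threshold $\Delta$ (precisely the crossover point of your $X_E$ and $X_C$): the range $\tau<\Delta$ is bounded wholesale by $\Delta\,\Cf_{k-1}(A,B)$ with no dyadic sum, and for $\tau\ge\Delta$ the summand $\tau^k X_E$ decays like $\tau^{-2/3}$ (resp.\ $\tau^{-1}$), so the dyadic sum is geometric and dominated by the level $\tau=\Delta$, with no extra logarithm.

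The second, related problem is specific to $\F_p$. Your interpolation invokes the bound $X_E$, i.e.\ Corollary \ref{c:S_t_Aff}, at \emph{every} dyadic level down to $\tau\approx\max\{2,\,2|A||B|/p\}$, and hence needs $m(\mathcal{L}_\tau)|\mathcal{L}_\tau|\le p^2$ there. Your proposed verification via $m(\mathcal{L}_\tau)\le \Cf_3/\tau^3$ and the hypothesis $\Cf_4^2(A,B)\log|A|\le p|A|^5|B|^4$ does not go through at small $\tau$: for instance with $|A|=|B|\sim p^{2/3}$ and $\tau\sim|A||B|/p$ all available bounds for $m(\mathcal{L}_\tau)|\mathcal{L}_\tau|$ exceed $p^2$, so the corollary simply may not be applicable at those levels and the interpolated bound is unjustified there. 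In the paper's threshold argument this issue disappears: the corollary is only applied for $\tau\ge\Delta$, the product $m(\mathcal{L}_\tau)|\mathcal{L}_\tau|$ is nonincreasing in $\tau$, and the single check at $\tau=\Delta$ (formula \eqref{tmp:mL_check}, using $|\mathcal{L}_\Delta|\le\Cf_4\Delta^{-4}$, Lemma \ref{l:m,M_bounds} and the definition of $\Delta$) suffices, with the conditions $\Cf_4^2\log|A|\le p|A|^5|B|^4$ and $|A|^5|B|^6\le p^9$ used exactly for this check and for $\Delta\ge 2|A||B|/p$. Replacing your fixed-$\theta$ interpolation by this min/threshold structure repairs both defects and recovers the stated bounds.
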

\begin{proof} 
    We start with \eqref{f:paucity_R}. 
    The term $|B| |A|^4 + |A| |B|^4$ corresponds to vertical/horizontal lines plus an error, which is
    %the rest is 
    $O(|B| |A|^3 + |A| |B|^3)$. 
    Let $\mathcal{L}$ be the set of non--vertical and non--horizontal lines, having at least two points in $A\times B$. 
    Denote by $\mathcal{E}$ 
    %the right--hand side of 
    the error term in 
    \eqref{f:paucity}. 
    Let $\D>0$ be a parameter, which we will choose later. 
    %Applying estimate \eqref{f:Q_3_R} 
    Having the definition of the quantity $\Cf_3 (A,B)$
    and using the diadic Dirichlet principle, we get 
\[
    \mathcal{E} \ll \D^{} 
    %|A|^2 |B|^2 \log |A| 
    \Cf_3 (A,B) 
    + \sum_{j~:~2^j \ge \D} 2^{4j} |\mathcal{L}_{2^j}|  = 
    \D^{} \Cf_3 (A,B)
    %|A|^2 |B|^2 \log |A| 
    + \mathcal{E}' \,.
\]
    Put $m_* = (\E^{+} (A) \E^{+} (B))^{1/2}$, $M_* = (\ov{\E}^{\times}(A) \ov{\E}^{\times}(B))^{1/2}$.
    %Using 
    Applying 
    bound \eqref{f:S_t_Aff_2} of Corollary \ref{c:S_t_Aff}, combining with Lemma \ref{l:m,M_bounds} (with the parameter $k=2$),  we obtain 
\[
    \mathcal{E}' \ll \sum_{j~:~2^j \ge \D} 2^{4j} \cdot \left(
    2^{-14j/3} (m_* |A|^8 |B|^6 \log^2 |A|)^{1/3} + 2^{-4j} M^{1/2}_* |A|^2 |B|^{3/2} \log^{1/2} |A| \right)
\]
\[
    \ll
    \D^{-2/3} (m_* |A|^8 |B|^6 \log^2 |A|)^{1/3}
    +
    M^{1/2}_* |A|^2 |B|^{3/2} \log^{3/2} |A| \,.
\]
    Now choosing $\D^{5/3} = (m_* |A|^8 |B|^6 \log^2 |A|)^{1/3} \Cf^{-1}_3 (A,B)$ (one can check that $\D\ge 2$ thanks to formula \eqref{def:Q_k_i}, say), we get 
\[
    \mathcal{E} \ll \Cf^{2/5}_3 (A,B) m^{1/5}_* |A|^{8/5} |B|^{6/5} \log^{2/5} |A| + 
    M^{1/2}_* |A|^2 |B|^{3/2} \log^{3/2} |A| 
    %\,.
\]
as required. 
To obtain \eqref{f:paucity} we use the same argument, namely, 
\begin{equation}\label{tmp:05.10_1}
\mathcal{E} \ll \D^{} 
    %|A|^2 |B|^2 \log |A| 
    \Cf_4 (A,B) 
    + \sum_{j~:~2^j \ge \D} 2^{5j} |\mathcal{L}_{2^j}|  = 
    \D^{} \Cf_4 (A,B)
    %|A|^2 |B|^2 \log |A| 
    + \mathcal{E}' \,,
\end{equation}
and 
\begin{equation}\label{tmp:05.10_2}
    \mathcal{E}' \ll \sum_{j~:~2^j \ge \D} 2^{5j}
    \left( 
    2^{-6j} m^{1/3}_* |A|^{10/3} |B|^{8/3} \log^{2/3} |A| 
    + 2^{-5j} M^{1/2}_* |A|^{5/2} |B|^2 \log^{1/2} |A|
    \right) 
\end{equation}
\begin{equation}\label{tmp:05.10_3}
    \ll
    \D^{-1} m^{1/3}_* |A|^{10/3} |B|^{8/3} \log^{2/3} |A| 
    +
    M^{1/2}_* |A|^{5/2} |B|^2 \log^{3/2} |A| \,.
\end{equation}
    Now the optimal choice of $\D$ is $\D^2 = m^{1/3}_* |A|^{10/3} |B|^{8/3} \log^{2/3} |A| \cdot \Cf^{-1}_4 (A,B)$ (again $\D\ge 2$ thanks to formula  \eqref{def:Q_k_i}, say) and hence 
\[
    \mathcal{E} \ll 
        \Cf^{1/2}_4 (A,B) m^{1/6}_* |A|^{5/3} |B|^{4/3} \log^{1/3} |A|
        +
    M^{1/2}_* |A|^{5/2} |B|^2 \log^{3/2} |A| \,.
\]
    It remains to check that $\D \ge 2 |A||B|/p$ and $m(\mathcal{L}_\Delta) |\mathcal{L}_\Delta| \le p^2$.
    %Since $\Cf_4 (A,B) \ge |A|^4 |B| + |A| |B|^4$, it follows that $\Cf_4 (A,B) \gg (|A||B|)^{5/2}$ 
    Since by our condition\\ $\Cf^2_4 (A,B) \log |A| \le p |A|^5 |B|^4$, it follows that 
    %and hence 
    in view of trivial lower bounds $\E^{+}(A) \ge |A|^2$, $\E^{+}(B) \ge |B|^2$, we get 
\[
    \D^2 \gg m^{1/3}_* |A|^{5/6} |B|^{2/3} p^{-1/2} \ge |A|^{7/6} |B|^{} p^{-1/2} \,.
\]
    It is easy to see that the last quantity is greater than $(2 |A||B|/p)^2$ thanks to $|A|^5 |B|^6 \le p^9$. 
    Finally, using $|\mathcal{L}_\D| \le \Cf_4 (A,B) \D^{-4}$, Lemma \ref{l:m,M_bounds} with $k=2$, the definition of the quantity $\D$ and our assumption, we obtain 
\begin{equation}\label{tmp:mL_check}
   m(\mathcal{L}_\Delta) |\mathcal{L}_\Delta| \le  
   m_* \Cf_4 (A,B) \D^{-6} \le \Cf^4_4 (A,B) |A|^{-10} |B|^{-8} \log^2 |A| \le p^2 \,.
\end{equation}
%as required. 
    This completes the proof. 
$\hfill\Box$
\end{proof}

\bigskip

Of course one can rewrite the condition $\Cf^2_4 (A,B) \log |A| \le p |A|^5 |B|^4$ in terms of cardinalities of $A$ and $B$, using  \eqref{f:Q_4} but we leave this a little bit more precise condition. %$\Cf^2_4 (A,B) \log |A| \le p |A|^5 |B|^4$
Now let us consider a constructive family of sets satisfying the assumptions of Theorem \ref{t:paucity}.

\bigskip

{\bf Example.} Let $A\subseteq \F$, $|A| <\sqrt{|\F|}$, say, $|A+A| \le K|A|$ and consider $X:=A^{-1}$. Then one can quickly  show that $\E^{+}(X) \ll_K |A|^{3-1/4}$ (see \cite[Lemma 14]{s_asymptotic}) 
and $\E^{\times} (X+s) \ll_K |A|^{3-c}$ for a certain $c>0$ and any $s \in \F$.
We give a sketch of the proof of the last estimate. 
Indeed, for $s=0$ it immediately follows from Rudnev's Theorem \cite{Rudnev_pp} (with $c=1/2$) and for $s\neq 0$ (dividing we can suppose that $s=1)$ 
%the equation for $\E^{\times} (X+s)$ is 
we see that it is enough to solve the equation 
\begin{equation}\label{f:E(X)_Warren}
    \left(\frac{1}{a+b} +1 \right) \left(\frac{1}{c+d} +1 \right) =\la \,,
\end{equation}
    where $\la\neq 1$ is a fixed number. 
    If we consider $a,c$ as variables and $b,d$ as coefficients, then, clearly, \eqref{f:E(X)_Warren} determines a family of conics  and the number of the solutions to the equation can be estimated via the main result of  \cite{MPW}, say.

\bigskip

Similarly, we obtain an analogue of estimate \eqref{f:paucity} without any restrictions on size of $A$. 

\begin{theorem}
    Let $A\subseteq \F_p$ be  a set and $f_A (x) = A(x)-|A|/p$. Then 
\begin{equation}\label{f:f_A_5}
%\[
    \sum_{l \in \Aff(\F_p)} \left| \sum_x f_A(x) f_A(lx) \right|^5 \lesssim 
%\]
%    \ll
    (\E^{+} (f_A))^{1/6} |A|^{11/2}  
        +
    (\ov{\E}^{\times}(f_A))^{1/2} |A|^{9/2}  \,,
\end{equation}
    where the summation over $l$ in the last formula is taken over all affine transformations   
    having at least two points in $A\times A$. 
    In particular,
\begin{equation}\label{f:f_A_5_Q}
    \Cf_5 (A) - \frac{|A|^{10}}{p^3} - 2|A|^6 \lesssim  \frac{|A|^7}{p} +  \frac{|A|^4}{p^2} \Cf_3(f_A) 
%\]
%\begin{equation}\label{f:f_A_5_Q}
  +
    (\E^{+} (f_A))^{1/6} |A|^{11/2}  
    %\log^{5/6} |A|
        +
    (\ov{\E}^{\times}(f_A))^{1/2} |A|^{9/2}
    %\log^{3/2} |A| 
    \,.
\end{equation}
\label{t:f_A_5}
\end{theorem}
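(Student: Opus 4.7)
The plan is to mirror the argument for Theorem \ref{t:paucity} but substitute the balanced cross-correlation $\omega(l) := \sum_x f_A(x) f_A(lx) = i_{A,A}(l) - |A|^2/p$ for $i_{A,A}(l)$ throughout. The key observation is that $\sum_{l \in \Aff(\F_p)} \omega(l) = 0$, so the level-set estimate descends from Proposition \ref{p:counting_collinear} without the ``$\tau \ge 2|A|^2/p$'' threshold that appears in Corollary \ref{c:S_t_Aff}. Setting $\L_\tau^\omega := \{l \in \Aff(\F_p) : |\omega(l)| \ge \tau\}$, splitting by the sign of $\omega$, and bounding $\tau|\L_\tau^{\omega,\pm}| \le |\sum_l \omega(l)|$ via Proposition \ref{p:counting_collinear} with $k=1$ and Theorem \ref{t:Aff_energy} yields, exactly as in Corollary \ref{c:S_t_Aff},
\[
|\L_\tau^\omega| \ll \tau^{-16/3} m^{1/3}(\L_\tau^\omega) |A|^6 \log^{2/3}|A| + \tau^{-4} M^{1/2}(\L_\tau^\omega) |A|^{9/2} \log^{1/2}|A|.
\]

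I next establish the balanced analogues of Lemma \ref{l:m,M_bounds} at $k=2$: $m(\L_\tau^\omega) \le \tau^{-2}\E^+(f_A)$ and $M(\L_\tau^\omega) \le \tau^{-2}\ov{\E}^\times(f_A)$. For the first, if $m$ lines of common slope $a$ lie in $\L_\tau^\omega$, Cauchy--Schwarz reduces the task to the estimate $\sum_{\beta \in \F_p} \omega(a, \beta)^2 \le \E^+(f_A)$, and the latter follows by expanding the left-hand side as $\sum_t g_{f_A}(at)\, g_{f_A}(t)$ with $g_{f_A}(t) := \sum_y f_A(y) f_A(y+t)$ and one further Cauchy--Schwarz; the bound for $M$ is obtained symmetrically, pivoting around the concurrence point $(x_0, y_0)$ and identifying the relevant inner sum with a multiplicative autocorrelation of $f_A(\cdot + y_0)$ and $f_A(\cdot + x_0)$. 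Substituting gives
\[
|\L_\tau^\omega| \ll \tau^{-6} m_*^{1/3}|A|^6\log^{2/3}|A| + \tau^{-5}M_*^{1/2}|A|^{9/2}\log^{1/2}|A|,
\]
where $m_* := \E^+(f_A)$ and $M_* := \ov{\E}^\times(f_A)$.

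To sharpen $m_*^{1/3}|A|^6$ into $m_*^{1/6}|A|^{11/2}$, I combine the preceding with a Markov-type inequality $|\L_\tau^\omega| \le \tau^{-4}\sum_l \omega(l)^4$ and the estimate $\sum_{l} \omega(l)^4 \lesssim |A|^5 \log|A|$. The latter comes from expanding $(i - |A|^2/p)^4$ binomially and summing over $l \in \Aff$: all five leading-order $|A|^{2k}/p^{k-2}$ contributions coming from $(|A|^2/p)^{4-k}\sum_l i(l)^k$ cancel identically across $k=0,\ldots,4$ (essentially by the binomial identity $(1-1)^4 = 0$), and the residual is controlled by the error terms in the elementary formulas for $\sum_l i(l)$, $\sum_l i(l)^2$ together with \eqref{f:Q_3} (used with the sharper bound $O(p^{1/2}|A|^{7/2})$ when $|A| > \sqrt{p}$) and \eqref{f:Q_4}. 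Taking the minimum of the two bounds on $|\L_\tau^\omega|$ with crossover at $\tau_* \sim m_*^{1/6}|A|^{1/2}$ and integrating $\sum_l |\omega(l)|^5 = \int_0^\infty 5\tau^4 |\L_\tau^\omega|\, d\tau$ produces both terms of \eqref{f:f_A_5}. Finally, \eqref{f:f_A_5_Q} follows by writing $\Cf_5(A) = 2|A|^6 + \sum_{l\in\Aff} i(l)^5 + O(|A|^6)$ via \eqref{def:Q_k_i} and expanding $i(l)^5 = \sum_k \binom{5}{k}(|A|^2/p)^{5-k}\omega(l)^k$: the $k=0$ term gives the main $|A|^{10}/p^3$; $k=1$ vanishes; $k=2,4$ contribute $O(|A|^7/p)$ via the moment bounds on $\omega$; $k=3$ gives $(|A|^4/p^2) \Cf_3(f_A)$; and $k=5$ is absorbed by \eqref{f:f_A_5}. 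The most delicate step is the fourth-moment cancellation: it is algebraically automatic only when leading terms are matched carefully, and keeping the residual below $|A|^5 \log|A|$ requires the sharpest known bound from \eqref{f:Q_3} in both of its regimes together with the trivial bound $|A| \le p$.
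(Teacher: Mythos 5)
Your argument is, in all essential respects, the paper's own proof: the redefined level sets $\L_\tau$ for $\omega(l)=\sum_x f_A(x)f_A(lx)$ with the sign-splitting trick, the balanced $k=2$ analogue of Lemma \ref{l:m,M_bounds} giving $m(\L_\tau)\le\tau^{-2}\E^{+}(f_A)$ and $M(\L_\tau)\le\tau^{-2}\ov{\E}^{\times}(f_A)$, the Markov bound from the fourth moment $\sum_l\omega(l)^4\ll|A|^5\log|A|$ for small $\tau$, the crossover at $\tau_*\sim(\E^{+}(f_A))^{1/6}|A|^{1/2}$, and the binomial expansion of $i^5$ around $|A|^2/p$ for \eqref{f:f_A_5_Q} all match (the paper organizes the last expansion as $i^2\cdot(i-|A|^2/p+|A|^2/p)^3$, which has the small advantage of producing the main term $|A|^{10}/p^3$ directly from $\sum_{i(l)>1}i^2(l)=|A|^4-|A|^2$ rather than from counting lines with $i(l)\le 1$). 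The fourth-moment estimate you re-derive is simply quoted in the paper as \eqref{f:collinear-} from \cite{collinear}, where it is noted to be equivalent to \eqref{f:Q_4}; your derivation of it is sound, though the $k=3$ residual already closes with the single bound $O(p^{1/2}|A|^{7/2})$ and $|A|\le p$, so you do not in fact need both regimes of \eqref{f:Q_3}.

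The one genuine omission is that Theorem \ref{t:Aff_energy}, which underlies your level-set estimate for $|\L_\tau|$, carries the hypothesis $m(\L_\tau)|\L_\tau|\le p^2$, and you never verify it. This is not vacuous: for $|A|$ close to $p$ the product $m(\L_\tau)|\L_\tau|$ can exceed $p^2$ at the relevant scales of $\tau$, and without the hypothesis the energy bound $\E(\L_\tau)\ll m^{1/2}|\L_\tau|^{5/2}+M|\L_\tau|^2$ is not available. The paper checks this condition as in \eqref{tmp:mL_check}, finds that $|A|\ll p/\log^3|A|$ suffices, and then removes that restriction by splitting $A$ into pieces at the cost of extra logarithms (which is one reason the statement is phrased with $\lesssim$). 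You need to add this verification, or the corresponding splitting argument, for the proof to be complete.
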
 
\begin{proof} 
    Let $\mathcal{E}$ be the left--hand side of \eqref{f:f_A_5}. 
    First of all, we redefine the set $\mathcal{L}_\tau$ as 
\[
    \mathcal{L}_\tau = \L_\tau (f_A) := \left\{ l\in \mathcal{L} ~:~ \left| \sum_x f_A(x) f_A(lx) \right| \ge \tau \right\} \,.
\]
    Then estimate \eqref{f:counting_collinear} of Proposition \ref{p:counting_collinear}  with $A=B$ gives us 
\begin{equation}\label{tmp:05.11_1}
    \tau |\mathcal{L}_\tau| \ll \sqrt{|A| |B| |\mathcal{L}_\tau|} \cdot 
    (\E (\mathcal{L}_\tau) |A| \log |A|)^{1/8} \,.
\end{equation}
    Indeed, 
\[
    \tau |\mathcal{L}_\tau| \le \sum_{l\in \mathcal{L}_\tau} \left| \sum_x f_A(x) f_A(lx) \right| = 
    \sum_{l\in \mathcal{L}_\tau} \eps(l) \sum_x f_A(x) f_A(lx) \,,
\]
    where by $\eps(l)$ we have denoted the sign of $\left| \sum_x f_A(x) f_A(lx) \right|$. 
    In other words, we 
    consider 
    %have obtained 
    a new function $\mathcal{L}^\eps_\tau (l) := \mathcal{L}_\tau (l) \eps(l)$. 
    Clearly, $\|\mathcal{L}^\eps_\tau \|_1 = |\mathcal{L}_\tau|$ and for  any integer $l\ge 2$ one has 
    $\T_{l} (\mathcal{L}^\eps_\tau) \le \T_{l} (\mathcal{L}_\tau)$.
    Thus using the Cauchy--Schwarz inequality as in the proof of \cite[Proposition 7]{s_He} (alternatively  see the proof of Theorem  \ref{t:incidences_new} below), we obtain \eqref{tmp:05.11_1} (without any condition on $\tau$). 
    Thus Proposition \ref{p:counting_collinear} (estimate \eqref{tmp:05.11_1}) holds for such defined $\mathcal{L}_\tau$ and hence Corollary \ref{c:S_t_Aff} takes place as well. 
    As for an analogue of Lemma \ref{l:m,M_bounds}, we have for $m=m(\mathcal{L}_\tau)$ and an arbitrary even $k$ that 
    %and the signs $\eps (\beta)$, $\beta \in \mathcal{B}$ that 
\[
    \tau m \le  \sum_{\beta \in \mathcal{B}}\, \left| \sum_{x\in B} f_A (\a x+\beta) \right| 
%    \sum_{\beta \in \mathcal{B}}\, \eps(\beta) \sum_{x\in B} f_A (\a x+\beta) 
    = 
    \sum_{\beta \in \mathcal{B}}  |r_{\a B-f_A} (\beta)| 
%    \le 
%\]
%\[
    \le 
    m^{1-1/k} (\E^{+}_k (\a f_B,f_A))^{1/k} 
    \le
\]
\[
    \le 
    m^{1/2} (\E^{+}_k (f_A) \E^{+}_k (f_B))^{1/4} \,.
\]
    and similarly for $M(\mathcal{L}_\tau)$. 
    Finally, in \cite{collinear} it was proved that 
\begin{equation}\label{f:collinear-}
    \sigma = \sum_{l \in \Aff(\F_p)} \left| \sum_x f_A(x) f_A(lx) \right|^4 \ll |A|^5 \log |A| \,, 
\end{equation}
    where again  the summation over $l$ in the last formula is taken over all affine transformations having at least two points in $A\times A$. Actually, \eqref{f:collinear-} is equivalent to asymptotic formula \eqref{f:Q_4}.
    Thus we can repeat the calculations in \eqref{tmp:05.10_1}---\eqref{tmp:05.10_3} and obtain 
\[
    \mathcal{E} \ll \sigma^{1/2} m^{1/6}_* |A|^{3}  \log^{1/3} |A|
        +
    M^{1/2}_* |A|^{9/2} \log^{3/2} |A| 
    \ll 
\]
\[
    \ll
    m^{1/6}_* |A|^{11/2}  \log^{5/6} |A|
        +
    M^{1/2}_* |A|^{9/2} \log^{3/2} |A| 
    \,,
\]
    where $m_* = \E^{+} (f_A)$, $M_* = \ov{\E}^{\times}(f_A)$.
%as required. 
    It remains to check the condition $m(\LL_\tau)|\LL_\tau| \le p^2$ as in \eqref{tmp:mL_check}. 
    Using these calculations, as well as \eqref{f:collinear-}, we see that the condition $|A|\ll p/\log^3 |A|$ is enough. 
    Splitting our set $A$ if its needed and loosing some logarithms, we arrive to \eqref{f:f_A_5} for all $A$.

    Now let us obtain \eqref{f:f_A_5_Q}. 
    The term $2|A|^6$ corresponds to vertical/horizontal lines and since $\E^{+} (f_A)), \ov{\E}^{\times}(f_A) \gg |A|^2$, it follows that all appearing terms, which less than $|A|^{35/6}$ are negligible. 
    Let $i(l) = |l\cap (A\times A)|$ and below we consider just $l$ with $i(l)>1$. Then $\sum_l i^2 (l) = |A|^4 -|A|^2$ and 
\[
    \sum_{l} i^5(l) = \sum_l i^2(l) \left( i(l) - \frac{|A|^2}{p} + \frac{|A|^2}{p} \right)^3
    =
    \sum_l i^2(l) \left( i(l) - \frac{|A|^2}{p} \right)^3 
    +
\]
\[
    +
    \frac{3|A|^2}{p} \sum_l i^2(l) \left( i(l) - \frac{|A|^2}{p} \right)^2 + \frac{3|A|^4}{p^2} \left( \sum_l i^3(l) - \frac{|A|^2}{p} \sum_l i^2(l) \right) + \frac{|A|^{10}}{p^3} - \frac{|A|^8}{p^3} \,. 
\]
    It is easy to check that 
%    We have 
    %in view of \eqref{f:Q_3} that 
\[
    \sum_l i^3(l) - \frac{|A|^2}{p} \sum_l i^2(l)  \le  \Cf_3(f_A) \,.
    %+ \frac{|A|^{4}}{p} 
\]
%    The terms of the form $O(|A|^8/p^3)$ are negligible.
    %less than $|A|^5$ and hence 
    The sum $\sum_l i^2(l) \left( i(l) - \frac{|A|^2}{p} \right)^2$ was estimated in \cite[page 606]{collinear}, namely, splitting the summation over $i(l)<2|A|^2/p$ and $i(l) \ge 2|A|^2/p$, as well as using \eqref{f:Q_4}, we get 
\[
    \sum_l i^2(l) \left( i(l) - \frac{|A|^2}{p} \right)^2
    \ll
        \left(\frac{|A|^2}{p} \right)^2 p|A|^2 + |A|^5 \log |A| 
        \ll |A|^5 \log |A|
\]
    and hence we have the remaining term $O(p^{-1} |A|^7 \log |A|)$.
    Finally,
\[
    \sum_l \left( i(l) -\frac{|A|^2}{p} + \frac{|A|^2}{p}  \right)^2 \left( i(l) - \frac{|A|^2}{p} \right)^3 = \sum_l \left( i(l) - \frac{|A|^2}{p} \right)^5 + \frac{2|A|^2}{p} \sum_l \left( i(l) - \frac{|A|^2}{p} \right)^4 
    +
\]
\[
    + \frac{|A|^4}{p^2} \sum_l \left( i(l) - \frac{|A|^2}{p} \right)^3
    = \mathcal{E} + O( p^{-1} |A|^7 \log |A| + \frac{|A|^4}{p^2} \Cf_3(f_A)) \,.
\]
    Combining all bounds and estimate \eqref{f:f_A_5}, we obtain 
    \eqref{f:f_A_5_Q}. 
    This completes the proof. 
$\hfill\Box$
\end{proof}

\section{Some applications}
\label{sec:applications}

The power saving in \eqref{f:f_A_5} (for sets with small additive/multiplicative energies of its shifts) allows us to obtain our new incidence result Theorem \ref{t:incidences_new_intr} from the introduction. 
%Of course, the same is true for the equation $$
We follow the method of the proof from  \cite[Proposition 7]{s_He}.

\begin{theorem}
    Let $k\ge 2$ be an integer, $M\ge 1$ be a real number,   $A,B,C,X,Y\subseteq \F_p$, $0\notin X$ be sets, $|YC|\le M|Y|$,
    and $|C| \gtrsim_k M^{2^{k+1}}$, $|C|^{k-1} \ge (p/|Y|)^2$. 
    Suppose that $\max\{ \E^{+} (f_B), \ov{\E}^\times (f_B) \} \le |B|^{3-\d}$, where $\d>0$ is a constant. 
    Then 
    %the number of the solutions to the equation
\begin{equation}\label{eq:incidences_new}
    |\{ (a,b,x,y)\in A\times B \times X\times Y ~:~ y = bx+a \}| 
        - \frac{|A||B||X||Y|}{p}
    \ll 
    \sqrt{|A||B|} |X| |Y| \cdot |B|^{-\frac{\d}{35 \cdot 2^k}} \,.
    %\,, 
\end{equation}
%    where $\eps (\d)>0$. 
\label{t:incidences_new}
\end{theorem}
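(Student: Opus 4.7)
The plan is to follow the method from the proofs of Theorem \ref{t:f_A_5} and \cite[Proposition 7]{s_He}. Setting $S := T - |A||B||X||Y|/p$ where $T$ is the incidence count, projective duality in the affine plane rewrites
\[
    S = \sum_{l \in \L^{\ast}} \psi(l), \qquad \L^{\ast} := \{b \mapsto -xb+y : (x,y) \in X\times Y\},
\]
where $\psi(l) := i_{A\times B}(l) - |A||B|/p$ and $\L^{\ast}\subseteq\Aff(\F_p)$ consists of affine maps with slopes in $-X$ and intercepts in $Y$. I would dyadically decompose the sum by the size of $|\psi(l)|$: with $\L^{\ast}_\tau := \{l \in \L^{\ast} : |\psi(l)| \ge \tau\}$ and $\eps(l) := \mathrm{sign}(\psi(l))$, the sign-trick argument of Theorem \ref{t:f_A_5} combined with Proposition \ref{p:counting_collinear} gives
\[
    \tau|\L^{\ast}_\tau| \ll \sqrt{|A||B||\L^{\ast}_\tau|}\cdot(\T_{2^k}(\L^{\ast}_\tau)\,|A|\log|A|)^{1/2^{k+2}}.
\]

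I would then obtain two complementary bounds on $|\L^{\ast}_\tau|$. The first exploits the pseudorandomness of $Y$: by Lemma \ref{l:T_k-E(L)} one has $\T_{2^k}(\L^{\ast}_\tau) \le \T_{2^k}(\L^{\ast}) \le |X|^{2^{k+1}-1}\T^+_{2^k}(Y)$, and by Theorem \ref{t:upper_T(AB)+}, the hypotheses $|YC|\le M|Y|$ and $|C|^{k-1}\ge(p/|Y|)^2$ yield $\T^+_{2^k}(Y)\lesssim_k M^{2^{k+1}}|Y|^{2^{k+1}}/p$. The second exploits the small energies of $f_B$: invoking Corollary \ref{c:S_t_Aff} with the point set $A\times B$ and applying Lemma \ref{l:m,M_bounds} with $k_0 = 2$,
\[
    m(\L^{\ast}_\tau) \le \tau^{-2}(\E^+(A)\E^+(B))^{1/2},\qquad M(\L^{\ast}_\tau) \le \tau^{-2}(\ov{\E}^\times(A)\ov{\E}^\times(B))^{1/2},
\]
and the hypothesis $\max\{\E^+(f_B),\ov{\E}^\times(f_B)\}\le|B|^{3-\delta}$, combined with $\E^+(B)=\E^+(f_B)+|B|^4/p$ and its multiplicative analogue, gives $m,M\lesssim\tau^{-2}|A|^{3/2}|B|^{(3-\delta)/2}$. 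This introduces the crucial factor $|B|^{-\delta/O(1)}$.

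Finally, combining the two bounds on $|\L^{\ast}_\tau|$ and summing dyadically with an optimised threshold $\Delta$ (balancing the low-$\tau$ contribution $\Delta|\L^{\ast}|$ against $\sum_{\tau\ge\Delta}\tau|\L^{\ast}_\tau|$) yields $|S|\ll\sqrt{|A||B|}\,|X||Y|\cdot|B|^{-\delta/(35\cdot2^k)}$ after arithmetic. The principal obstacle is the intricate bookkeeping: tracing the $\delta$-saving through the iterated H\"older / Cauchy--Schwarz applications to arrive at the precise exponent $1/(35\cdot 2^k)$; handling separately the additive and multiplicative cases arising from the maximum; and verifying the technical hypotheses of Theorem \ref{t:Aff_energy} used inside Corollary \ref{c:S_t_Aff}, namely $m(\L^{\ast}_\tau)\cdot|\L^{\ast}_\tau|\le p^2$ together with the rich-line threshold $\tau\gtrsim|A||B|/p$.
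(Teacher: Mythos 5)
Your overall framing (dualising to a line set $\L^{\ast}$ indexed by $X\times Y$ and aiming to extract a $|B|^{-\d/O(1)}$ saving from the energy hypothesis) is reasonable, but the concrete mechanism you propose for the saving does not work, and it is not the paper's mechanism. You bound the first moment $\sum_{l}|\psi(l)|$ by a dyadic decomposition in $\tau=|\psi(l)|$ and control $|\L^{\ast}_\tau|$ by two bounds: one via $\T_{2^k}(\L^{\ast})\le|X|^{2^{k+1}-1}\T^{+}_{2^k}(Y)$, which scales correctly with $|X||Y|$ but contains no $\d$--saving (it merely reproduces \cite[Proposition 7]{s_He}); and one via Corollary \ref{c:S_t_Aff} and Lemma \ref{l:m,M_bounds}, which carries the $\d$--saving but depends only on $A,B,\tau$ and not on $X,Y$. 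The conclusion \eqref{eq:incidences_new} must scale linearly in $|X||Y|$, and the theorem is aimed precisely at $X,Y$ as small as $p^{\eps}$ while $A,B$ are large. In that regime your second bound, summed over $\tau\ge\Delta\sim\sqrt{|A||B|}$, produces a term of order $|A|^{4/3}|B|^{2/3-\d/6}$ carrying no factor of $|X||Y|$, which overwhelms the target $\sqrt{|A||B|}\,|X||Y|\,|B|^{-\d/(35\cdot2^k)}$; taking the minimum of, or interpolating between, your two bounds does not close this gap, since the first bound has no $\d$--saving at any level $\tau$.

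The paper's route differs at exactly this point. After the iterated Cauchy--Schwarz in the point variable one arrives at $\sigma^{2^k}\le|A|^{2^{k-1}}|B|^{2^{k-1}-1}\sum_h r_{(\LL^{-1}\LL)^{2^{k-1}}}(h)\sum_x f_B(x)f_B(hx)$, and then one applies H\"older once more in $h$ with exponents $(5/4,5)$, so that the factor $\big(\sum_h\big|\sum_x f_B(x)f_B(hx)\big|^5\big)^{1/5}$ appears. This fifth moment runs over \emph{all} affine maps $h$ (weighted only through the separate factors $\T_{2^k}(\LL)^{1/5}|\LL|^{3\cdot 2^k/5}$), and it is bounded by formula \eqref{f:f_A_5} of Theorem \ref{t:f_A_5} by $|B|^{6-\d/7}$: this is the only place the hypothesis on $\E^{+}(f_B),\ov{\E}^{\times}(f_B)$ enters, and it is independent of $X,Y$. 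The $|X||Y|$ scaling then comes from $\T_{2^k}(\LL)$ via Lemma \ref{l:T_k-E(L)} and Theorem \ref{t:upper_T(AB)+}, and the exceptional $h$ with at most one point in $B\times B$ are handled separately. Your proposal borrows the dyadic technique from the \emph{proof} of Theorem \ref{t:f_A_5} but never invokes its \emph{statement}; without that extra H\"older step the $\d$--saving cannot be made to coexist with the required $|X||Y|$ factor, so the argument as described cannot be completed.
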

\begin{proof} 
    At the beginning we repeat the arguments from the proofs of  \cite[Proposition 8]{s_He} and Proposition \ref{p:counting_collinear}. 
    Let $\LL$ be 
    %an arbitrary  
    the 
    set of the lines $\{ l_{x,y}\}$, $x\in X$, $y\in Y$ 
    %with coefficients $x,y$ 
     defined as $y=bx+a$, so $a,b$ are variables.
    Thus the equation from the left--hand side of \eqref{eq:incidences_new} can be treated as a question about incidences between  points $\P= A\times B$ and lines $\LL$.
    We have 
    \begin{equation}\label{f:I_balanced} 
    \I (A\times B, \mathcal{L}) = \frac{|A||B||\mathcal{L}|}{p}  +  \sum_{x\in A} \sum_{l \in \LL} f_B (l x) =
 \frac{|A||B||\mathcal{L}|}{p} + \sigma \,.   
\end{equation}
    Using the H\"older inequality several times, we get
\begin{equation}\label{tmp:04.10_1*}
    \sigma^{2^{k}} \le |A|^{2^{k-1}}  |B|^{2^{k-1}-1} 
    \sum_{h} r_{(\mathcal{L}^{-1} \mathcal{L})^{2^{k-1}}} (h) \sum_x f_B (x) f_B (h x) \,.
\end{equation}
Assume that the summation in the last formula is taken over lines $h$, having at least two points in $B\times B$, denote the rest as $\sigma_*$ and suppose that $\sigma_* \le 2^{-1} \sigma^{}$, say.
    Then applying the H\"older inequality one more time, combining with formula \eqref{f:f_A_5} of Theorem \ref{t:f_A_5}, we obtain 
\[
    \sigma^{5\cdot 2^{k}} \le |A|^{5\cdot 2^{k-1}}  |B|^{5\cdot 2^{k-1}-5} \T_{2^k} (\LL) |\LL|^{3\cdot 2^k}
    \cdot 
    \sum_h \left| \sum_x f_B (x) f_B (h x) \right|^5 
    \ll
\]
\[
    \ll 
    |A|^{5\cdot 2^{k-1}}  |B|^{5\cdot 2^{k-1}-5} \T_{2^k} (\LL) |\LL|^{3\cdot 2^k}  |B|^{6-\d/7} \,.
\]
    By Lemma \ref{l:T_k-E(L)}, we know that $\T_{2^k} (\LL) \le |X|^{2^{k+1}-1} \T^{+}_{2^k} (Y)$. 
    Using our conditions $|C| \gtrsim_k M^{2^{k+1}}$, $|C|^{k-1} \ge (p/|Y|)^2$, as well as Theorem \ref{t:upper_T(AB)+} for $A=Y$ and $B=C$ to estimate the quantity $\T^{+}_{2^k} (Y)$, we obtain 
\begin{equation}\label{tmp:29_12_1}
    \sigma \ll \sqrt{|A| |B|} |X||Y| (|B|^{1-\d/7} /(|X|p))^{1/5\cdot 2^k}
    \le 
        \sqrt{|A| |B|} |X||Y| \cdot |B|^{-\d/(35 \cdot 2^k)} \,.
\end{equation}
    It remains to estimate $\sigma_*$. 
    Returning to 
    %\eqref{f:I_balanced}, 
    \eqref{tmp:04.10_1*}, we see that
\[
    \sum_x f_B (x) f_B (h x) = \sum_x B (x) B (h x) - \frac{|B|^2}{p}
\]
    and hence 
$$
    \sigma^{5\cdot 2^{k}}_* \ll |A|^{5\cdot 2^{k-1}}  |B|^{5\cdot 2^{k-1}-5} |\mathcal{L}|^{5\cdot 2^{k}} \,.
%    \cdot \max\left\{ \frac{|Y|^{10}}{p^5}, 1 \right\} \,.
$$
 %     If the maximum is attained at one, then we obtain a better result than in \eqref{tmp:29_12_1}. 
    and this is better  than  \eqref{tmp:29_12_1}. 
%      $|Y|^8/p^5$, then $\sigma_*$ is negligible because, clearly, $\T_{2^k} (\LL) \ge |\LL|^{2^{k+1}}/p$ and $|A|\le p$.
    This completes the proof. 
$\hfill\Box$
\end{proof}

%\section{}

\bigskip

We now obtain another application, using our structural Theorem \ref{t:Ek} (also, see the discussion and the definitions after this result). We show that for all sets having no special form one can obtain a good power saving for $\Cf_n (f_A)$. It allows us to estimate nontrivially (in a rather strong sense) some mixed energies of shifts of $A$, see Theorem \ref{t:E_4} below.

\begin{theorem} 
    Let $A\subseteq \F$ be  a set and $f_A (x) = A(x)-|A|/|\F|$.
    Suppose that $A$ is not $\E^{+}$--exceptional and for any $s\in \F$ the set $A-s$  is not $\E^{\times}$--exceptional with $\d,\eps$. 
    %$\max\{\E^{+}(f_A), \overline{\E}^\times (f_A)\} \le |A|^{3-c}$. 
    Then for any  number $n\ge \exp(C\eps^{-1}\log(1/\d))$, where $C>0$ is an absolute constant one has 
\begin{equation}\label{f:f_A_k}
%\[
    \sum_{l \in \Aff(\F)} \left| \sum_x f_A(x) f_A(lx) \right|^n \ll |A|^{n+2/3+\d} \,.
\end{equation}
%    In particular, if $n\ge $, then 
%\begin{equation}\label{f:f_A_k+}
%    \sum_{l \in \Aff(\F_p)} \left| \sum_x f_A(x) f_A(lx) \right|^n \ll |A|^n \,.
%\end{equation}
\label{t:f_A_k}
\end{theorem}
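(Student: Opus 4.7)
The plan is to generalize the proof of Theorem \ref{t:f_A_5} by invoking higher--order energies, using the non--exceptionality hypotheses together with part $(1)$ of Theorem \ref{t:Ek} to supply an integer $k=k(\d,\eps)=\exp(O(\eps^{-1}\log(1/\d)))$ for which
\[
    \E^{+}_k(A)\le |A|^{k+\d} \qquad\text{and}\qquad \ov{\E}^{\times}_k(A)\le |A|^{k+\d}.
\]
Via Lemma \ref{l:m,M_bounds} applied with this large $k$ (in the function version valid as explained in the proof of Theorem \ref{t:f_A_5}), this yields $m(\L_\tau)\le \tau^{-k}|A|^{k+\d}$ and $M(\L_\tau)\le \tau^{-k}|A|^{k+\d}$, where $\L_\tau$ denotes the set of affine transformations $l\in\Aff(\F_p)$ satisfying $\big|\sum_x f_A(x)f_A(lx)\big|\ge\tau$ and containing at least two points of $A\times A$.

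Substituting these bounds into Corollary \ref{c:S_t_Aff}, which (as observed in the proof of Theorem \ref{t:f_A_5}) continues to hold in this signed setting, produces
\[
    |\L_\tau|\ll \tau^{-(16+k)/3}|A|^{6+(k+\d)/3}\log^{2/3}|A|+\tau^{-(8+k)/2}|A|^{9/2+(k+\d)/2}\log^{1/2}|A|.
\]
A dyadic decomposition
\[
    \sum_{l}\Big|\sum_x f_A(x)f_A(lx)\Big|^n \ll |A|^4+\sum_{j=0}^{\log|A|}2^{nj}|\L_{2^j}|,
\]
where the $|A|^4$ absorbs the contribution of those $l$ with $|S(l)|<1$ (of which there are $O(|A|^4)$), then gives, as soon as $n$ exceeds $\max\{(16+k)/3,\,(8+k)/2\}$, two geometric series in $j$ each dominated by their largest--$\tau$ term $\tau\asymp|A|$. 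These produce
\[
    \sum_{l}\Big|\sum_x f_A(x)f_A(lx)\Big|^n\ll |A|^{n+2/3+\d/3}\log^{2/3}|A|+|A|^{n+1/2+\d/2}\log^{1/2}|A|,
\]
and absorbing the logarithms into a fraction of the $|A|^{\d}$ slack yields the claimed bound $|A|^{n+2/3+\d}$. The threshold $n>(8+k)/2\sim k/2$ is exactly what the hypothesis $n\ge\exp(C\eps^{-1}\log(1/\d))$ supplies for a sufficiently large absolute constant $C$.

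The main technical obstacle is verifying the hypothesis $m(\L_\tau)|\L_\tau|\le p^2$ required by Theorem \ref{t:Aff_energy} (and hence by Corollary \ref{c:S_t_Aff}) across the full range of $\tau$. In the dominant regime $\tau\asymp|A|$ the check is direct, but for smaller $\tau$ one must either invoke the trivial estimate $|\L_\tau|\le O(|A|^4)$ in place of the refined one when the latter exceeds it, or split $A$ into smaller pieces exactly as in the closing paragraph of the proof of Theorem \ref{t:f_A_5}, at the cost of extra logarithmic factors. One also has to verify that the energies of $f_A$ are controlled by those of $A$ itself, which is routine via $\E^{+}_k(f_A)\le \E^{+}_k(A)$ and its multiplicative counterpart, modulo negligible corrections of order $|A|^{2k}/p^{k-1}$ absorbed whenever $|A|$ is not too close to $p$.
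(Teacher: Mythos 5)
Your proposal is correct and follows essentially the same route as the paper's proof: a dyadic decomposition of the $n$-th moment combined with Corollary \ref{c:S_t_Aff}, Lemma \ref{l:m,M_bounds} (in the signed version set up in the proof of Theorem \ref{t:f_A_5}) and the non-exceptionality hypothesis fed through Theorem \ref{t:Ek}. The only (harmless) difference is that the paper tunes the energy indices to $n$, taking $k_1=3n-16$ and $k_2=2n-8$ so that every dyadic level contributes equally and the sum costs a logarithm, whereas you keep $k=k(\d,\eps)$ fixed and let the resulting geometric series be dominated by its top term $\tau\asymp|A|$; both yield the exponent $n+2/3+\d/3$ before the logarithms are absorbed into the $\d$-slack.
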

\begin{proof} 
    Suppose that $n\ge 6$ is  an even number and 
    let $\mathcal{E}_n$ be the left--hand side of \eqref{f:f_A_k}, i.e. the $n$th moment of the function $\sum_x f_A(x) f_A(lx)$. 
%    Our aim is to prove that $\E_n \le |A|^{n+1-c(n-4)/7}$ {\bf ???} for $n\ge 5$ and for $n=5$ is follows from   Theorem \ref{t:f_A_5}. 
    We mimic  the calculations in \eqref{tmp:05.10_1}---\eqref{tmp:05.10_3} and obtain
    %via the $(n-1)$th moment that 
\begin{equation}\label{f:E_n_basic}
%    \mathcal{E}_n \ll \D \mathcal{E}_{n-1} + \sum_{j: 2^j \ge \D} 2^{nj} |\L_{2^j}| \,.
    \mathcal{E}_n \ll \sum_{j} 2^{nj} |\L_{2^j}| \,.
\end{equation}
    Now to estimate $|\L_{2^j}|$ we use 
    %inequality \eqref{f:L_tau_m,M_E}, 
    formula \eqref{f:S_t_Aff_1} of Corollary \ref{c:S_t_Aff} and 
    Lemma \ref{l:m,M_bounds} with $k_1=3n-16\ge 2$, $k_2 = 2n-8 \ge 4$ (hence $k_1$, $k_2$ are  even numbers automatically) 
    %and the following trivial bound $\E^{+}_k (f_A) \le |A|^{k-2} \E^{+} (f_A) \le |A|^{k+1-c}$ and similarly, $\overline{\E}^\times_k (f_A) \le |A|^{k+1-c}$
    to get 
\[
    |\mathcal{L}_\tau| \ll \tau^{-(k_1+16)/3} (\E^{+}_{k_1} (f_A))^{1/3} |A|^6 \log^{2/3} |A| 
    + \tau^{-(k_2+8)/2} (\ov{\E}^{\times}_{k_2} (f_A))^{1/2} |A|^{9/2} \log^{1/2} |A| \,.
\]
    Hence summing the last estimate over $j$ and using  our basic 
    %estimate 
    bound 
    \eqref{f:E_n_basic}, we get 
\[
    \mathcal{E}_n \ll \sum_{j} 2^{nj-j(k_1+16)/3} (\E^{+}_{k_1} (f_A))^{1/3} |A|^6 \log^{2/3} |A| 
    + \sum_{j} 2^{nj-j(k_2+8)/2} (\ov{\E}^{\times}_{k_2} (f_A))^{1/2} |A|^{9/2} \log^{1/2} |A| 
%    \ll 
\]
\[
    \ll 
    (\E^{+}_{3n-16} (f_A))^{1/3} |A|^6 \log^{5/3} |A| + (\ov{\E}^{\times}_{2n-8} (f_A))^{1/2} |A|^{9/2} \log^{3/2} |A| 
    \,.
\]
    Now applying Theorem \ref{t:Ek} and the condition that $A-s$ is not exceptional for both energies $\E^{+}$, $\E^\times$ with $\d,\eps$, we derive 
\[
    \mathcal{E}_n \ll |A|^{n+2/3+\d/3} \log^{5/3} |A|
    \ll |A|^{n+2/3+\d/10} 
\] 
	as required. 
%    This completes the proof. 
$\hfill\Box$
\end{proof}

\bigskip 

Of course one can obtain an analogue of asymptotic formula \eqref{f:f_A_5_Q} of Theorem \ref{t:f_A_5} for larger $k$ but it requires some calculations and we leave it for the interested reader.

%\section{Applications} 

\bigskip 

Having sets $A,B\subset \F$ and an integer $k\ge 2$
let $\tilde{\Cf}_k(A,B)$ be $\Cf_k (A,B) - |A||B|^k - |B| |A|^k$, that is, we have deleted all horizontal/vertical lines from our consideration. Also, we put 
\[
    \tilde{\E}^\times_k (A,B) 
    = 
    \left| \left\{ (a_1,\dots,a_k, b_1,\dots, b_k) \in (A\setminus\{0\})^k \times (B\setminus\{0\})^k ~:~ \frac{a_1}{b_1} = \dots = \frac{a_k}{b_k} \right\} \right| \,.
\]
%%For simplicity suppose that $0\notin A,B$. 
Then from the equation of a line $y=\lambda x+ \mu$, intersecting $A\times B$, we derive
\begin{equation}\label{f:t_Ck}
    \tilde{\Cf}_k(A,B) 
    + O_k ( |A|^2 |B|^2 \cdot \left( \min\{|A|, |B|\} \right)^{k-3}) 
    = \sum_{\lambda \neq 0} \E^{+}_k (B,\lambda A) = \sum_{\mu} \tilde{\E}^\times_k (B-\mu,A) 
    %\,.
\end{equation}
similar to 
%thanks to 
%equation 
formula 
\eqref{def:Q_k_i}. 
As will we see from the proof of Theorem \ref{t:E_4} there are other expressions for $\Cf_k(A,B)$ in terms of higher energies $\E_k$. 
Notice that in the symmetric case $A=B$ we always have $\tilde{\E}^\times_k (A,A) \gg |A|^k$.
%$\E^{\times}_k (A) \ge |A|^k$ but the quantity $\tilde{\E}^\times_k (A,A)$ can be smaller. 
In contrast, using our paucity result, we obtain

\begin{theorem}
    Let $A,B\subset \F_p$ be sets, $|A|=|B| \le p^{2/3}$, and $\E^{+} (A)$, $\ov{\E}^{\times}(A) \le |A|^{3-c}$, where $c\in [0,1)$. 
    Then there are $b_1,b_2\in B$ such that 
\begin{equation}\label{f:E_4_1}
    \tilde{\E}^\times_4 (A-b_1,A-b_2) \ll |A|^{4-\frac{2c}{15}} \log |A| \,.
\end{equation}
    Now suppose that our set $A$ is not $\E^{+}$--exceptional and for any $s\in \F$ the set $A-s$  is not $\E^{\times}$--exceptional with $\d,\eps$. 
    Then for any  number $n\ge \exp(C\eps^{-1}\log(1/\d))$, where $C>0$ is an absolute constant 
    %one has for 
    one can find 
    a pair $b_1,b_2\in B$ with  
\begin{equation}\label{f:E_4_2}
    \tilde{\E}^{\times}_n (f_A-b_1,f_A-b_2) \ll |A|^{n-1/3+\delta} \,. 
\end{equation}
\label{t:E_4}
\end{theorem}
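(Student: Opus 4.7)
The plan is a direct pigeonhole over pairs $(b_1,b_2)\in B\times B$, combined with the paucity bounds established earlier. The key combinatorial observation is that the common-ratio condition $(a_i-b_1)/(a'_i-b_2)=\lambda$ in the definition of $\tilde{\E}^\times_n(A-b_1,A-b_2)$ says precisely that the points $(a'_i,a_i)\in A\times A$ are collinear on the affine line $y=\lambda z+(b_1-\lambda b_2)$, which is non-degenerate (neither horizontal nor vertical) and passes through $(b_2,b_1)$. Consequently, up to the harmless correction of excluding $(b_2,b_1)$ itself from $A\times A$,
$$
\tilde{\E}^\times_n(A-b_1,A-b_2)=\sum_{\ell\,\ni\,(b_2,b_1),\ \ell\text{ non-deg.}}i_{A,A}(\ell)^n.
$$
Summing over $(b_1,b_2)\in B\times B$ and bounding $|\ell\cap(B\times B)|\le|B|$ for any non-vertical line $\ell$ gives
$$
\sum_{(b_1,b_2)\in B\times B}\tilde{\E}^\times_n(A-b_1,A-b_2)\le|B|\cdot\tilde{\Cf}_n(A)+\text{lower order}.
$$

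For part 1 I take $n=4$ and insert the $\F_p$-paucity bound for $\tilde{\Cf}_4(A)$ obtained by repeating the dyadic argument of Theorem \ref{t:paucity}: writing $\tilde{\Cf}_4(A)=\sum_j2^{4j}|\L_{2^j}|$ and bounding $|\L_{2^j}|$ via Corollary \ref{c:S_t_Aff} together with Lemma \ref{l:m,M_bounds} for $k=2$ under the hypotheses $\E^+(A),\ov{\E}^\times(A)\le|A|^{3-c}$ and $|A|\le p^{2/3}$. Tracking the exponents through the dyadic optimisation yields $\tilde{\Cf}_4(A)\ll|A|^{5-2c/15}\log|A|$. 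Averaging over the $|B|^2=|A|^2$ pairs then produces $(b_1,b_2)\in B\times B$ with $\tilde{\E}^\times_4(A-b_1,A-b_2)\le\tilde{\Cf}_4(A)/|B|\ll|A|^{4-2c/15}\log|A|$, which is \eqref{f:E_4_1}.

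For part 2 the same identity is applied with $A$ replaced by $f_A$: the change of variables $z=y+b_2$ turns the inner sum into $\sum_z f_A(\ell z)f_A(z)$ with $\ell(z)=\lambda z+(b_1-\lambda b_2)$ running precisely over the non-horizontal affine maps whose graph contains $(b_2,b_1)$ as $\lambda\in\F^*$ varies. Hence
$$
\sum_{(b_1,b_2)\in B\times B}\tilde{\E}^\times_n(f_A-b_1,f_A-b_2)\le|B|\cdot\sum_{\ell\in\Aff(\F)}\Bigl|\sum_x f_A(x)f_A(\ell x)\Bigr|^n,
$$
and Theorem \ref{t:f_A_k} gives the inner moment as $\ll|A|^{n+2/3+\delta}$ once $n\ge\exp(C\eps^{-1}\log(1/\delta))$. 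Averaging over the $|A|^2$ pairs produces some $(b_1,b_2)\in B\times B$ satisfying the claimed bound $|A|^{n-1/3+\delta}$, establishing \eqref{f:E_4_2}.

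The main obstacle lies in the dyadic bookkeeping for part 1: one must weigh the Stevens--de Zeeuw-type term $\tau^{-16/3}m^{1/3}|A|^{10/3}|B|^{8/3}$ against the line-concurrency term $\tau^{-4}M^{1/2}|A|^{5/2}|B|^2$ in \eqref{f:S_t_Aff_1}, propagate the input savings $|A|^{-c}$ from both energies through the optimisation, and verify the side condition $m(\L_\Delta)|\L_\Delta|\le p^2$ as in \eqref{tmp:mL_check}. A minor subtlety in part 2 is that $\tilde{\E}^\times_n$ of a real-valued function is most naturally defined with absolute values on the inner sum, which is handled either by restricting to even $n$ (automatic at the stated threshold) or by inserting signs $\eps(\ell)$ as in the proof of Theorem \ref{t:f_A_5}.
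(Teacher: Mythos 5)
Your part 2 is fine: bounding $i_{B,B}(\ell)\le |B|$ on each non-degenerate line through a point of $B\times B$ and then invoking Theorem \ref{t:f_A_k} gives $\sum_{b_1,b_2\in B}\tilde{\E}^\times_n(f_A-b_1,f_A-b_2)\ll |B|\cdot|A|^{n+2/3+\delta}$, and pigeonholing over the $|A|^2$ pairs yields exactly \eqref{f:E_4_2}; this is essentially the paper's (very terse) argument. The genuine gap is in part 1. Your whole reduction rests on an intermediate bound $\tilde{\Cf}_4(A)\ll|A|^{5-2c/15}\log|A|$ \emph{over} $\F_p$, and no such bound is available: the fourth-moment paucity of Theorem \ref{t:paucity} (estimate \eqref{f:paucity_R}) is proved only over $\R$, because it uses \eqref{f:S_t_Aff_2}, i.e.\ Szemer\'edi--Trotter. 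If you "repeat the dyadic argument" over $\F_p$ you are forced to use \eqref{f:S_t_Aff_1} instead, which is only valid for $\tau\ge 2|A||B|/p$, so the dyadic sum must be truncated from below at some $\D$ and you pay a term $\D\,\Cf_3(A)$. Since $\Cf_3(A)\gg|A|^4$ always and only $\Cf_3(A)\ll |A|^6/p+|A|^{9/2}$ is known, optimising $\D$ against $\D^{-2}(\E^+(A))^{1/3}|A|^6$ produces an error of order $\Cf_3(A)^{2/3}(\E^+(A))^{1/9}|A|^2$, which is $\gg|A|^{5+1/3-c/9}$ in the relevant range --- no saving below $|A|^5$ at all, and in no regime does the exponent $5-2c/15$ appear. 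This is precisely why the paper's $\F_p$ paucity result is stated for the \emph{fifth} moment (Theorem \ref{t:f_A_5}, \eqref{f:f_A_5_Q}, \eqref{tmp:CF_5_intr}) and not for $\Cf_4$.

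The missing idea is to avoid lowering the moment: instead of the crude $i_{B,B}(\ell)\le|B|$, the paper writes $\sigma=\sum_{b_1,b_2\in B}\tilde{\E}^\times_4(A-b_1,A-b_2)$ as $\sum_\ell i_{B,B}(\ell)\,i_{A,A}(\ell)^4$ over non-degenerate lines and applies H\"older at exponent $5$, $\sigma\le \tilde{\Cf}_5(A)^{4/5}\,\tilde{\Cf}_5(B)^{1/5}$, after first discarding the lines with $i_{A,A}(\ell)=1$ (or $i_{B,B}(\ell)=1$), whose contribution is $\ll\Cf_4(A\cup B)\ll|A|^5\log|A|$ by \eqref{f:Q_4}. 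Then the $\F_p$ fifth-moment paucity gives $\tilde{\Cf}_5(A)\ll|A|^{6-c/6}\log^{5/6}|A|+|A|^5\log|A|$ (using $|A|\le p^{2/3}$ and the energy hypotheses), while for $B$ the trivial $\tilde{\Cf}_5(B)\ll|B|\,\Cf_4(B)\ll|B|^6\log|B|$ suffices; this yields $\sigma\ll|A|^{6-2c/15}\log|A|$ and \eqref{f:E_4_1} after averaging over the $|B|^2$ pairs. So your pigeonhole skeleton is the right one, but for \eqref{f:E_4_1} you must route the $A$-side through the fifth moment via H\"older rather than through a fourth-moment bound that does not exist over $\F_p$.
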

\begin{proof} 
    Let $L = \log |A|$. 
    The number of collinear quintuplets $(b_1,b_2), (a_1,a'_1), \dots, (a_4,a'_4) \in (B\times B) \times (A\times A)^4$ equals the number $\sigma$ of the solutions to the system 
\[
    \frac{a_1-b_1}{a'_1 - b_2} = \dots = \frac{a_4-b_1}{a'_4 - b_2}
\]
    with non--zero numerators and denominators plus quintuplets, which correspond to vertical/ho\-ri\-zon\-tal lines. 
    Thus the sum 
    $\sigma := \sum_{b_1,b_2\in B} \tilde{\E}^\times_4 (A-b_1,A-b_2)$ can be estimated via formula \eqref{f:paucity} of Theorem \ref{t:paucity}.
%    as 
%    $$ 
%        \sum_{b_1,b_2\in B} \tilde{\E}^\times_4 (A-b_1,A-b_2)
%            \ll 
%            \sum_{b_2\in B} \tilde{\Cf}_4 (A,A-b_2) + |A|^6 
%    $$
    We cannot write $\sigma \le \t{\Cf}_5(A,A)^{4/5} \t{\Cf}_5 (B,B)^{1/5}$ using the H\"older inequality because one can have $i_{A,A} (l) = 1$, say.
    %or $i_{B,B} (l) = 1$. 
    Nevertheless, the contribution of these  terms in view of formula \eqref{f:Q_4} is at most 
    %$|A|^2 |B|^3 + 
    $\Cf_4 (A\cup B) \ll |A|^5 L$ and hence it is negligible. 
    Thus by estimate \eqref{f:paucity} and our restrictions $|A|=|B| \le p^{2/3}$, we obtain 
\[
    \sum_{b_1,b_2\in B} \tilde{\E}^\times_4 (A-b_1,A-b_2) \ll (|A|^{6-c/6} L^{5/6} + |A|^5 L)^{4/5} (|A|^6 L + |A|^5 L)^{1/5}
    + |A|^5 L
    \ll |A|^{6-2c/15} L 
    %\,.
\]
    and we have proved bound \eqref{f:E_4_1}. 
    To obtain \eqref{f:E_4_2} we just apply Theorem \ref{t:f_A_k} instead of Theorem \ref{t:paucity}. 
This completes the proof. 
$\hfill\Box$
\end{proof}

\bigskip 

Of course, in a similar way, using formula  \eqref{f:t_Ck} and our bounds for $\tilde{\Cf}_k(A,B)$, one can derive some upper bounds for 
$\E^{+}_k (B,\lambda A)$,  $\tilde{\E}^\times_k (B-\mu,A)$ %with 
for typical shifts 
$\lambda, \mu$.

\bigskip

\noindent{I.D.~Shkredov\\
Steklov Mathematical Institute,\\
ul. Gubkina, 8, Moscow, Russia, 119991}
\\
%and
%\\
%IITP RAS,  \\
%Bolshoy Karetny per. 19, Moscow, Russia, 127994\\
%and 
%\\
%MIPT, \\ 
%Institutskii per. 9, Dolgoprudnii, Russia, 141701\\
{\tt ilya.shkredov@gmail.com}


\begin{thebibliography}{99}


\bibitem{MPW}
{\sc A. Mohammadi, T. Pham, and A. Warren, } 
{\em A Point-Conic Incidence Bound and Applications over $\F_p$, } %arXiv preprint 
arXiv:2111.04072 (2021).


\bibitem{collinear}
{\sc B. Murphy, G. Petridis, O. Roche‐Newton, M.  Rudnev, I. D. Shkredov, }
{\em New results on sum‐product type growth over fields, } 
Mathematika, 65(3)  (2019), 588--642.


\bibitem{PR-NRW}
{\sc G. Petridis, O. Roche‐Newton, M.  Rudnev, A. Warren, }
{\em An Energy Bound in the Affine Group, }
International Mathematics Research Notices, 2020; rnaa130, https://doi.org/10.1093/imrn/rnaa130



\bibitem{Rudnev_pp}
{\sc M. Rudnev, }
{\em On the number of incidences between planes and points in three dimensions, }
Combinatorica, {\bf 38}:1 (2018), 219--254; doi:10.1007/s00493-016-3329-6.


\bibitem{RS_SL2}
{\sc M. Rudnev, I.D. Shkredov, }
{\em On growth rate in $\SL_2(\F_p)$, the affine group and sum-product type implications, }
arXiv:1812.01671v3 [math.CO] 26 Feb 2019. 




\bibitem{s_asymptotic}
{\sc I.D. Shkredov, }
{\em On asymptotic formulae in some sum--product questions, } Tran. Moscow Math. Soc, {\bf 79}:2 (2018), 271--334; English transl. Trans. Moscow Math. Society 2018, pp.231--281.


\bibitem{s_Bourgain}
{\sc I.D. Shkredov, }
{\em Some remarks on the asymmetric sum--product phenomenon, } MJCNT, (2018), 101--126, dx.doi.org/10.2140/moscow.2018..101


\bibitem{s_Sidon}
{\sc I.D. Shkredov, }
{\em On an application of higher energies to Sidon sets, }
arXiv:2103.14670 (2021).




\bibitem{s_He}
{\sc I.D. Shkredov, }
{\em On the multiplicative Chung--Diaconis--Graham process, } 
arXiv:2106.09615 (2021).


\bibitem{Sol_dZ}
{\sc J. Solymosi, F. de Zeeuw, }
{\em Incidence bounds for complex algebraic curves on Cartesian products, } New Trends in Intuitive Geometry. Springer, Berlin, Heidelberg, 2018. 385--405.


\bibitem{SdZ} {\sc S. Stevens, F. de Zeeuw, }  {\em An improved point-line incidence bound over arbitrary fields,} Bull. LMS  {\bf 49}: 842--858, 2017.


\bibitem{ST} {\sc E. Szemer\'edi, W.T.  Trotter, } {\em  Extremal problems in discrete geometry}, Combinatorica {\bf 3}(3-4):381--392, 1983. 



\bibitem{TV}
{\sc T.~Tao, V.~Vu, }{\em Additive combinatorics,} Cambridge University Press 2006.


\bibitem{Vinh} {\sc L. A. Vinh,}
{\em The Szemer\'edi-Trotter type theorem and the sum-product estimate in finite fields,}  European J. Combin. {\bf 32}(8): 1177--1181, 2011.


\end{thebibliography}
\end{document}